\documentclass[12pt, a4paper]{amsart}
\usepackage{amssymb,amsmath,enumitem}
\usepackage[foot]{amsaddr}
\usepackage{graphicx}
\usepackage{comment}
\usepackage[dvipsnames]{xcolor}
\usepackage[linktocpage]{hyperref}
\hypersetup{colorlinks=true, citecolor=JungleGreen, linkcolor=Plum, urlcolor=Sepia}
\newtheorem{theorem}{Theorem}[section]
\newtheorem{lemma}[theorem]{Lemma}

\theoremstyle{definition}
\newtheorem{definition}[theorem]{Definition}

\theoremstyle{remark}
\newtheorem{remark}[theorem]{Remark}

\numberwithin{equation}{section}
\newcommand{\R}{\mathbb R} 
\newcommand{\N}{\mathbb N} 
\newcommand{\dom}{\Omega} 

\newcommand{\bdy}{\partial^\infty\dom} 
\newcommand{\cl}{\overline{\dom}^\infty} 
\newcommand{\GP}{{\bf{G}}} 
\newcommand{\CF}{C} 
\newcommand{\RM}{\mathcal{M}^{+}(\dom)} 
\newcommand{\K}{{\bf{K}}} 

\newcommand{\qtext}{\quad\text}
\newcommand{\con}{c} 

\newcommand{\A}{\mathcal{A}}
\newcommand{\LL}{\mathcal{L}}
\newcommand{\LGP}{\GP_{\LL}}
\newcommand{\I}{\mathbf{I}}
\newcommand{\dlim}{\displaystyle\lim}

\makeatletter
\newcounter{con}
\def\cont{\refstepcounter{con}\con_{\thecon}}
\newcommand{\conlabel}[1]{\cont\label{#1}}
\newcommand{\conref}[1]{\con_{\scriptsize\ref{#1}}}

\@namedef{subjclassname@2020}{%
  \textup{2020} Mathematics Subject Classification}
\makeatother
\title[Dirichlet problem for Lane--Emden type equations]
{Dirichlet problem for Lane--Emden type equations\\
with several sublinear terms}
\subjclass[2020]{Primary 35J91; Secondary 31B10, 35J25, 35B09} 
\keywords{Lane--Emden type equation, divergence form operator, bounded solutions, fractional Laplace operator, Green function, integral equation}
\author{Toe Toe Shwe}
\address[Toe Toe Shwe]{Sirindhorn International Institute of Technology, Thammasat University, Pathum Thani 12120, Thailand}
\email{\href{mailto:d6722300172@g.siit.tu.ac.th}{d6722300172@g.siit.tu.ac.th (T.T. Shwe)}}
\thanks{}

\author{Kentaro Hirata}
\address[Kentaro Hirata]{Department of Mathematics, Graduate School of Advanced Science and Engineering,
Hiroshima University, Higashi-Hiroshima 739-8526, Japan}
\email{\href{mailto:hiratake@hiroshima-u.ac.jp}{hiratake@hiroshima-u.ac.jp (K. Hirata)}}
\thanks{}

\author{Adisak Seesanea}
\address[{Corresponding Author}: Adisak Seesanea]{Sirindhorn International Institute of Technology, Thammasat University, Pathum Thani 12120, Thailand}
\email{\href{adisak.see@siit.tu.ac.th}{adisak.see@siit.tu.ac.th (A. Seesanea)}}
\thanks{}
\begin{document}
\begin{abstract}
We prove the existence, uniqueness, and sharp bilateral pointwise estimates for positive bounded solutions 
to the Lane--Emden type problem
\[
\begin{cases}
 	\mathcal{L} u = \sum\limits_{i=1}^{m}\sigma_{i} u^{q_{i}}+\sigma_0,
	\quad u\geq0 & \text{in } \Omega, \\
	\liminf \limits_{x \rightarrow y} u(x) = f(y),  & y \in \partial^\infty\Omega,
\end{cases}
\]
where $0 < q_{i} < 1$.
Here $\mathcal{L} u = - \operatorname{div}(\A \nabla u)$
is a uniformly elliptic operator with bounded coefficients,
$\sigma_{i}$ is a nonnegative locally finite Borel measure on
an  $\A$-regular domain $\Omega \subset \R^n$
which possesses a positive Green function associated with $\mathcal{L}$,
and $f$ is a nonnegative continuous function on the boundary $\partial^\infty\Omega$.
An analogous result for positive continuous solutions to the problem
is also illustrated.  

Our method can be adapted to address related sublinear problems
with zero boundary conditions
involving the fractional Laplace operator $(-\Delta)^{\alpha}$
for $0< \alpha < n/2$, in place of $\mathcal{L}$,
in $\mathbb{R}^n$ as well.
\end{abstract}
\maketitle
\tableofcontents
\section{Introduction}\label{sect:intro}
\subsection{Overview}
 We study the Dirichlet problem for
 the Lane--Emden type equation
\begin{equation}\label{main:eq}
\begin{cases}
	\LL u = \sum\limits_{i=1}^{m}\sigma_{i} u^{q_{i}}+\sigma_0,
	\quad u\geq0 &\text{in } \dom, \\
	\liminf \limits_{x \to y} u(x) = f(y),  &  y \in \bdy,
\end{cases}
\end{equation}
involving several sublinear growth terms where
$0 < q_i <1$  ($i = 1, \dots, m$),
and the coefficients $\sigma_i$ ($i = 0, 1,\dots, m$)
are nonnegative locally integrable functions,
or more generally
nonnegative locally finite Borel measures on
a domain $\dom\subset\R^n$ $(n\ge2)$.
The set of all such measures will be denoted by $\RM$.

The operator
$\LL u = - \operatorname{div}(\A \nabla u)$
with $\A: \dom \rightarrow \R^{n \times n}$
being a real $n \times n$ symmetric matrix-valued measurable function on $\dom$
is assumed to satisfy
the uniform ellipticity condition, that is,
there exists a positive constant $\beta \ge 1$ such that 
\begin{equation}\label{cond:ellipticity}
\beta^{-1} |\xi|^{2} \leq \A(x)\xi \cdot \xi \leq \beta |\xi|^{2},
\end{equation}
for almost every $x \in \dom$ and for every $\xi \in \R^n$.
When $\A(x) = I_{n}$ is the $n \times n$ identity matrix,
$\LL = -\Delta$ the classical Laplace operator on $\R^n$.

The domain $\dom$
is assumed to be $\A$-regular (or simply, regular)
for the Dirichlet problem (cf. \cite{HKM})
and possess a positive Green function $G_{\LL}$
associated with $\LL$
(see e.g. \cite{Ken, LSW}).
Also, $f \in \CF^{+}(\bdy)$;
the set of all nonnegative continuous functions on the boundary of $\dom$,
denoted by $\bdy$,
in the one point compactification $\R^{n} \cup \{ \infty \}$.
Therefore,
$\infty$ belongs to $\bdy$ if and only if $\dom$ is unbounded.

We will understand a {\em solution} $u$ to \eqref{main:eq}
as a nonnegative function on $\dom$
belonging to $L_{\rm loc}^{q_i}(\dom, \sigma_i)$
for all $i=1, \dots, m$,
which satisfies the integral equation
\begin{equation}\label{inteq1}
	u = \sum_{i=1}^{m} \LGP (u^{q_i} d\sigma_{i}) + \LGP\sigma_0 + H_f \quad \text{in } \dom,
\end{equation}
where $H_f$ is the Perron solution (cf. \cite{HKM})
of the Dirichlet problem  
\[
\begin{cases}
  \LL h=0 & \text{in } \dom, \\
 \lim \limits_{x \rightarrow y} h(x) = f(y), &  y \in \bdy.
\end{cases}
\]
Here the potential $\GP \sigma$ of $\sigma \in \RM$
associated with a positive lower semicontinuous function
$G : \dom \times \dom \to (0, +\infty]$
is defined by
\begin{equation} \label{genpot}
	\GP \sigma (x) = \int_\dom G (x,y) \, d\sigma (y),
	\quad x \in \dom.
\end{equation}
Note that
$\GP \sigma $ is again lower semicontinuous on $\Omega$
(see \cite{Bre}).
We call such $G$ a {\em kernel}.
When $G= G_{\LL}$,
we use $\LGP\sigma$ to denote
the corresponding Green potential of $\sigma$.

In addition, we will also investigate the following sublinear nonlocal problem 
\begin{equation}\label{main:frac}
\begin{cases}
 	(-\Delta)^{\alpha} u
	= \sum\limits_{i=1}^{m}\sigma_{i} u^{q_{i}} + \sigma_0,
	\quad u\geq0  & \text{in } \R^n, \\
	\liminf \limits_{x \rightarrow \infty} u(x) = 0,  
\end{cases}
\end{equation}
where $q_i$ and $\sigma_i$ are as above and
$(-\Delta)^{\alpha}$, with  $0<\alpha<n/2$,
is the fractional Laplace operator on $\R^n$.
In the same spirit as above, \eqref{main:frac} will be understood in the integral sense 
\begin{equation}\label{inteq-q}
	u
  =
  	\sum_{i=1}^{m} \I_{2\alpha}(u^{q_i} d\sigma_{i})
	+
	\I_{2\alpha} \sigma_0 \quad \text{in } \R^{n},
\end{equation}
where we always implicitly assume
$u \in L_{\rm loc}^{q_i}(\R^{n}, \sigma_i)$
 for all $i = 1, \dots, m$ so that each
$u^{q_i} d\sigma_{i} \in \mathcal{M}^{+}(\R^{n})$. 

The Riesz potential $\I_{2\alpha} \sigma$ of
$\sigma \in \mathcal{M}^{+}(\R^n)$
with order $0 < \alpha < n/2$ is given by \cite{AH}
\[
	\I_{2\alpha} \sigma (x) 
  =
  	\int_{\R^n} k_{2 \alpha}(x,y) \, d\sigma(y)
  =
  	\con(n,\alpha) \int_{0}^{\infty}
	\frac{\sigma (B(x,r))}{r^{n-2\alpha}} \frac{dr}{r},
	\quad  x \in \R^n,
\]
where $k_{2 \alpha}(x,y) = |x -y|^{2\alpha - n}$ is the Riesz kernel and
$B(x,r) = \{ y \in \R^n : |x-y| < r \}$
for $x \in \R^n$ and $r >0$.
In particular, when $\alpha=1$, $\I_{2}\sigma$ becomes the classical Newtonian potential. 
Note that $ \liminf \limits_{x\to\infty} \ \I_{2\alpha} \sigma(x) =0$ whenever $ \I_{2\alpha} \sigma \not\equiv +\infty$
(see \cite{Lan, Miz}).
\subsection{Main Goal}
This work aims to develop a method for simultaneously establishing necessary and sufficient conditions for the existence and uniqueness of positive bounded solutions to the Dirichlet problems \eqref{main:eq} and \eqref{main:frac}.
Our framework further yields sharp two-sided pointwise estimates for such solutions.
Moreover, we obtain an analogous result for positive continuous solutions to similar problems of the type \eqref{main:eq}.
\subsection{State of the Art}
One primary motivation for studying
the Lane--Emden type equation \eqref{main:eq} with sublinear terms
stems from its connection to the porous medium equation (PME).
In particular, by setting
$\LL = -\Delta$, $\sigma_0 = 0$, $m = 1$, $q_1=q$
and letting $\sigma_1 = \sigma(x)$
be a nonnegative measurable function, \eqref{main:eq} reduces to
\begin{equation}\label{bk}
	-\Delta u = \sigma(x) u^{q} \quad \text{in } \dom.
\end{equation}
Such equations are intimately linked to the analysis of the associated PME
$\sigma (x)  U_ t = \Delta (U^{Q})$ in $\dom \times (0, \infty)$, 
where $Q = 1/q > 1$ (see \cite{JL}). 

Brezis and Kamin's seminal work \cite{BK} established
the existence and uniqueness of a positive bounded solution $u$
to \eqref{bk} 
with zero boundary values in the case
$\sigma(x) \in L_{\rm loc}^{\infty}(\R^n)$ and
$\dom=\R^{n}$ ($n \geq 3$).
They also proved
the sharp two-sided pointwise estimates
\begin{equation}\label{bkest}
	\con^{-1} (\I_{2}\sigma)^{\frac{1}{1-q}}
  \le u \le  \con \, \I_{2} \sigma  \quad \text{in } \R^n. 
\end{equation}
See also the work of Mabrouk \cite{KE} for a treatment of the existence of a bounded solution to \eqref{bk} in an arbitrary domain $\dom$.

We would like to refer the reader to \cite{ADPU24,SFU24} for recent related results concerning the existence and pointwise behaviors of bounded solutions to a more general class of sublinear problems of the form 
\eqref{bk}.

For the nonlinear counterparts of \eqref{bk}, a characterization of positive $p$-superharmonic (or equivalently locally renormalized) solutions to the corresponding $p$-Laplace equations in the sub-natural growth case $0 < q < p-1$, satisfying the Brezis--Kamin type estimates in terms of the nonlinear Wolff potentials, have been established in the work of Verbitsky \cite{Ver23}
(see also \cite{CV3}).

We seize this opportunity to mention the work \cite{HS} which investigates the existence and bilateral pointwise estimates of positive continuous solutions to \eqref{main:eq} in the case
$\LL = -\Delta$ and $m=1$.
Furthermore, the uniqueness of such solutions is established
under certain integrability conditions of the coefficient $\sigma_{1}$.
For further developments in this direction, see
\cite{BM1, BM2, KH1,  MM, MH}.

In recent years, we have seen significant progress in the study of several classes of solutions to problems of the type \eqref{main:eq} and \eqref{main:frac}. For instance, in \cite{SV2}, the last-named author and Verbitsky established necessary and sufficient conditions for the existence of positive finite generalized energy solutions to \eqref{main:eq} with zero boundary values in the case $m = 1$
(see also \cite{SV1} for a related result).
Subsequently, Verbitsky \cite{VE1} investigated the existence and uniqueness of general positive solutions to an exterior fractional Laplace problem for $m = 1$. More recently, May and the last-named author \cite{MS2} provided sufficient conditions for the existence of positive solutions with zero boundary values to \eqref{main:frac} in Lebesgue spaces when $m = 1$, and later \cite{MS1} extended the results to the case $m \geq 1$ in the more refined framework of Lorentz spaces. 
\subsection{Main Result}

Throughout this paper, we suppose that
\begin{itemize}\itemsep=3pt
\item
$\{ q_i \}_{i=1}^{m}\subset(0,1)$
is nonincreasing, and
\item
$\{ \sigma_i \}_{i=0}^{m} \subset \RM$ satisfies
$\sigma_{i} \not\equiv 0$ for all $i=1,\dots,m$.
\end{itemize}
We say that a positive solution $u$ to \eqref{main:eq}
is {\em minimal}
if $u \leq \tilde{u}$ in $\dom$
for any nontrivial supersolution $\tilde{u}$ to \eqref{main:eq}.

We now state our main result as follows.

\begin{theorem}\label{main:thm}
Let $\dom$ be a regular domain in $\R^n$ $(n\ge2)$
with a positive Green function $G_{\LL}$
and let $f \in \CF^{+}(\bdy)$.
Then there exists a positive bounded solution to \eqref{main:eq}
if and only if  
\begin{equation}\label{cond:greenbdd}
	\text{$\LGP\sigma_i$ is bounded on $\dom$ for all $i = 0,1, \dots, m$}. 
\end{equation}
In this case, there exists a minimal one,
and also all positive bounded solutions $u$ to \eqref{main:eq}
satisfy the two-sided pointwise estimates
\begin{equation}\label{eq:est i}
	\conref{c:lb-gen1}
	\sum_{i=1}^{m} (\LGP\sigma_{i})^{\frac{1}{1-q_{i}}}
	+ \LGP\sigma_0 + H_f
  \leq
	u
  \leq
	\conref{c:ub-gen1}^{q_{1}}
	\sum_{i=1}^{m} \LGP\sigma_{i}
	+ \LGP\sigma_0 + H_f
	\quad \text{in } \dom,
\end{equation}
where $\conlabel{c:lb-gen1} := (1-q_{1})^{\frac{1}{1-q_{1}}}$ and
\[
	\conlabel{c:ub-gen1}
  :=
  	\max\left\{1,
	\left(
	\sum_{i=0}^{m} \| \LGP\sigma_{i} \|_{\infty} + \|f\|_{\infty}
	\right)^{\frac{1}{1-q_{1}}}
	\right\}.
\]
If, in addition $\dom$ is a bounded uniform domain or
a uniform cone (when $\LL = -\Delta$ and $n\geq3$),
then the positive bounded solution is unique.
\end{theorem}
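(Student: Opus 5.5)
Necessity follows directly from the integral equation \eqref{inteq1}. If $u$ is a positive bounded solution, then $\LGP\sigma_0\le u$. Moreover $u\ge H_f+\LGP\sigma_0$, and $u$ is nontrivial. Since $\sigma_i\not\equiv0$, the potential $\LGP(u^{q_i}d\sigma_i)$ is bounded, so the obstacle is to deduce boundedness of $\LGP\sigma_i$ itself. For this I would first prove the lower bound of \eqref{eq:est i} for any positive supersolution. The tool is the standard iterated inequality for sublinear problems (Grigor'yan--Verbitsky type): if $v\ge \GP(v^{q}d\sigma)$ with $v>0$ and $G$ satisfies the weak maximum principle (true for Green kernels), then $v\ge (1-q)^{1/(1-q)}(\GP\sigma)^{1/(1-q)}$. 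Applying this to each term $i$ gives $u\ge c\,(\LGP\sigma_i)^{1/(1-q_i)}$ with constant $(1-q_i)^{1/(1-q_i)}\ge (1-q_1)^{1/(1-q_1)}$, since $q\mapsto(1-q)^{1/(1-q)}$ is decreasing and the $q_i$ are nonincreasing. Positivity of $u$ is needed here; I would obtain it from $u\ge\LGP(u^{q_i}d\sigma_i)$ and the strict positivity of $G_\LL$. Consequently $\LGP\sigma_i\le c^{-(1-q_i)}\|u\|_\infty^{1-q_i}<\infty$, which gives \eqref{cond:greenbdd}. Summing the pieces, using $u\ge\max_i(\cdots)$ together with the $\sigma_0$ and $H_f$ terms, yields the left inequality of \eqref{eq:est i}. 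The constant in the sum needs care, so I would aim for $u\ge$ each term and then handle the sum directly by running the iteration argument with all terms at once.

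For sufficiency, set $M=\sum_{i\ge0}\|\LGP\sigma_i\|_\infty+\|f\|_\infty$ and $C=\conref{c:ub-gen1}$. I would run monotone iteration: $u_0=H_f+\LGP\sigma_0$ (or a small positive subsolution), $u_{k+1}=\sum_i\LGP(u_k^{q_i}d\sigma_i)+\LGP\sigma_0+H_f$. The claim is that $v=C$ (a constant) serves as a supersolution bound, since
\[
\sum_i \LGP(C^{q_i}\sigma_i)+\LGP\sigma_0+H_f\le C^{q_1}M\le C.
\]
This uses $C\ge1$ (so $C^{q_i}\le C^{q_1}$) and $C^{1-q_1}\ge M$. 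By induction $u_k\le C$ and $u_k$ is increasing, so monotone convergence gives a solution $u\le C$. The solution satisfies $u\le C^{q_1}\sum_i\LGP\sigma_i+\LGP\sigma_0+H_f$, which is the right inequality of \eqref{eq:est i}. The same bound holds for any positive solution bounded by $C$; for a general bounded solution I would first show $\|u\|_\infty\le C$ via the same comparison: $\|u\|\le \sum\|u\|^{q_i}\|\LGP\sigma_i\|+\dots$, giving $\|u\|\le\max(1,M^{1/(1-q_1)})$. For minimality I would start the iteration from $u_0=0$, or from a small multiple of the lower bound, and show by induction that $u_k\le\tilde u$ for any nontrivial supersolution $\tilde u$. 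This is immediate if $u_0\le\tilde u$, which follows from the lower bound established above.

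Uniqueness in bounded uniform domains and uniform cones is the main obstacle. My plan is to show that any two positive bounded solutions $u\le w$, where $u$ is minimal, are comparable, $w\le c\,u$. This should follow from the two-sided estimates \eqref{eq:est i} combined with a boundary Harnack / 3G-type control available in uniform domains, ensuring $\LGP\sigma_i\le c(\LGP\sigma_i)^{1/(1-q_i)}+\dots$ is not needed globally but only comparability of $u$ and $w$. With comparability in hand, I would use the standard concavity argument: let $\lambda=\sup\{t\le1: tw\le u\}>0$. Then $u\ge \sum\LGP(\lambda^{q_i}w^{q_i}\sigma_i)+\dots\ge\lambda^{q_1}(w-H_f-\LGP\sigma_0)+H_f+\LGP\sigma_0\ge\lambda^{q_1}w$, which contradicts $\lambda<1$. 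The hard part is establishing comparability up to the boundary when $\sigma_i$ concentrates near $\bdy$; this is exactly where the uniform-domain geometry, via Green function estimates, enters.
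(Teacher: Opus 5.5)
Your necessity, existence, minimality and a priori bound arguments are essentially the paper's route: Theorem \ref{bounded} (a)--(c) with $G=G_{\LL}$, $b=1$, $H=H_f$. Two small repairs are needed.

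\emph{Summed lower bound.} To get the stated constant, apply the Grigor'yan--Verbitsky bound to each term $\LGP(u^{q_i}d\sigma_i)$, not to $u$. Set $c_i=(1-q_i)^{\frac{1}{1-q_i}}$. Start from $u\ge c_i(\LGP\sigma_i)^{\frac{1}{1-q_i}}$ and use the iterated inequality with $s=\frac{1}{1-q_i}$. This gives $\LGP(u^{q_i}d\sigma_i)\ge c_i^{q_i}(1-q_i)(\LGP\sigma_i)^{\frac{1}{1-q_i}}=c_i(\LGP\sigma_i)^{\frac{1}{1-q_i}}$. Then sum these inside the equation.

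\emph{Starting point of the iteration.} Start from $u_0=(1-q_1)^{\frac{1}{1-q_1}}\sum_i(\LGP\sigma_i)^{\frac{1}{1-q_i}}+\LGP\sigma_0+H_f$, which is a subsolution by the same computation. Starting from $0$ or from $H_f+\LGP\sigma_0$ produces the trivial solution when $f\equiv0$ and $\sigma_0\equiv0$.

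The uniqueness part has a genuine gap. It rests entirely on the comparability $w\le c\,u$, you give no mechanism for it, and it cannot come from \eqref{eq:est i}. The two sides of \eqref{eq:est i} have different orders: for $f\equiv0$, $\sigma_0\equiv0$, $m=1$ they are $(\LGP\sigma_1)^{\frac{1}{1-q_1}}$ and $\LGP\sigma_1$. Their ratio blows up wherever $\LGP\sigma_1\to0$, typically at $\bdy$, which is exactly where you need control. A 3G or boundary Harnack inequality is the right geometric input, but by itself it does not repair this mismatch. What is needed is a pair of \emph{matching} bounds, valid for all supersolutions and all subsolutions.

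The paper obtains them as follows. On bounded uniform domains and uniform cones, $G_{\LL}$ is quasi-metrically modifiable: $\tilde G(x,y)=G_{\LL}(x,y)/(\mathfrak m(x)\mathfrak m(y))$ is quasi-metric. Put $v=u/\mathfrak m$, $d\tilde\sigma_i=\mathfrak m^{1+q_i}d\sigma_i$, $d\tilde\sigma_0=\mathfrak m\,d\sigma_0$, $\tilde H=H_f/\mathfrak m$. The equation becomes an integral equation of the same type with a QM kernel. Verbitsky's intrinsic potential $\tilde{\K}$ then gives two bounds of the same shape:
a lower bound for supersolutions, from Lemma \ref{symmetricWMP} together with $(a+b)^q\ge\frac12(a^q+b^q)$ to split off $\tilde H$; and an upper bound for subsolutions, from Lemma \ref{lemupper} followed by Young's inequality to absorb $\tilde{\GP}\nu$.
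Both are a constant times $\mathfrak m\,\Phi$ plus $H_f$, with the same
\[
\Phi=\sum_{i=1}^{m}\Big[(\tilde{\GP}\tilde\sigma_i)^{\frac{1}{1-q_i}}+\tilde{\K}\tilde\sigma_i+\tilde{\GP}(\tilde H^{q_i}d\tilde\sigma_i)\Big]+\tilde{\GP}\tilde\sigma_0 .
\]
This gives $w\le\kappa u$ for any two positive bounded solutions. Only then does your concavity argument close the proof; it is equivalent to the paper's iteration $w\le\kappa^{q_1^j}u$.
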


See \cite{HW,KH4}
for the definitions of uniform domains and uniform cones.

\begin{remark}
It is worth pointing out that
the statement of Theorem \ref{main:thm} remains valid
when $f \equiv 0$,
for arbitrary (not necessarily regular) domains $\dom$
with a positive Green function $G_{\LL}$.
As noted above, such results were obtained in \cite{BK}
using a different  method from ours,
in the case of $\dom= \R^n$ $(n \geq 3)$,
$\LL = -\Delta$, $\sigma_{0} = 0$ (homogeneous case),
and a single sublinear term $m=1$ with
nonnegative $\sigma_{1}\in L^{\infty}_{\rm loc}(\R^n)$.
\end{remark}

In our approach to Theorem \ref{main:thm} presented in Section \ref{sect:bddsol},
we investigate positive bounded solutions to
the corresponding integral equation of the form \eqref{inteq1}
in a more general setting
\begin{equation}\label{inteq_gen}
	u = \sum_{i=1}^{m} \GP (u^{q_i} d\sigma_{i}) + \GP \sigma_0 + H, \quad
	u \ge 0
  \quad\text{in } \dom,
\end{equation}
where $0 < q_{i} < 1$, $\sigma_{i} \in \RM$,
$H$ is a nonnegative bounded function on an arbitrary domain $\dom \subset \R^n$,
and the kernel 
$G(x,y)$ is mildly assumed to satisfy
the {\em weak maximum principle} (WMP),
also known as the boundedness principle.
That is, there exists a constant $b \geq 1$ such that
\[
	\GP\sigma \leq b \text{ on } \dom
  \quad
	\text{whenever $\sigma \in \RM$ satisfies $\GP \sigma \leq 1$ on $\operatorname{supp}(\sigma)$}.
\] 
Here $\operatorname{supp}(\sigma)$ denotes the support of $\sigma$.
In the case $b =1$,
the kernel $G$ is said to satisfy the
{\em strong (Frostman's) maximum principle} (SMP). 
We note that the Green function $G_{\LL}$
on $\dom$
associated with a uniformly elliptic operator $\LL$
satisfies the SMP (see \cite{Ken, LSW}).
Also, the WMP is satisfied by radially nonincreasing kernels on $\R^n$
(see \cite[Section 2.6]{AH}).
In particular, the Riesz kernel
$k_{2\alpha}(x,y) = |x-y|^{2\alpha - n}$ with $0 < \alpha < n/2$
satisfies the WMP
with $b = 2^{n - 2\alpha}$
(see \cite[Theorem 1.5]{Lan}).

The robustness of our method in the nonlocal setting yields the following result for \eqref{main:frac}.

\begin{theorem}\label{fractional}
Let $0 < \alpha < n/2$, where $n\ge2$.
Then there exists a unique positive bounded solution $u$
to \eqref{main:frac} if and only if   
\begin{equation}\label{cond:rieszbdd}
  \text{$\I_{2\alpha} \sigma_i$ is bounded on $\R^n$ for all $i= 0,1,\dots,m$}. 
\end{equation}
Moreover, $u$ satisfies the two-sided pointwise estimates
\begin{equation}\label{eq:est r}
	\conref{c:lb-gen3}
	\sum_{i=1}^{m} (\I_{2\alpha}\sigma_{i})^{\frac{1}{1-q_{i}}}
	+\I_{2\alpha}\sigma_0 
  \le
	u
  \le
	\conref{c:ub-gen3}^{q_{1}}
	\sum_{i=1}^{m} \I_{2\alpha}\sigma_{i}+\I_{2\alpha}\sigma_0 
	\quad \text{in } \R^n ,
\end{equation}
where
$\conlabel{c:lb-gen3} := (1-q_{1})^{\frac{1}{1-q_{1}}} 2^{\frac{-q_{1} (n-2\alpha)}{(1-q_1)^{2}}}$
and
\[
	\conlabel{c:ub-gen3}
  :=
  	\max\left\{1,
	\left(
	\sum_{i=0}^{m} \|\I_{2\alpha}\sigma_{i}\|_{\infty}  
	\right)^{\frac{1}{1-q_{1}}}
	\right\}.
\]
\end{theorem}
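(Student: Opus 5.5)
Theorem \ref{fractional} follows from the general result for the integral equation \eqref{inteq_gen} with kernel $G = k_{2\alpha}$ on $\dom=\R^n$ and $H\equiv 0$. The general result has three parts, which we would carry out in order.

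\emph{Necessity.} If $u$ is a positive bounded solution of \eqref{inteq-q}, then $\I_{2\alpha}\sigma_0 \le u \le \|u\|_\infty$. Further, $\I_{2\alpha}(u^{q_i}d\sigma_i) \le \|u\|_\infty$, and we need this to control $\I_{2\alpha}\sigma_i$. For this we use the lower bound, which we prove independently of boundedness. The Grigor'yan--Verbitsky type iteration under the WMP gives
\[
u \ge \I_{2\alpha}(u^{q_i}d\sigma_i) \ge (1-q_i)^{\frac{1}{1-q_i}} b^{-\frac{q_i}{(1-q_i)^2}}\,(\I_{2\alpha}\sigma_i)^{\frac{1}{1-q_i}}, \qquad b=2^{n-2\alpha}.
\]
This comes from the inequality $\GP(\phi(\GP\mu)\,d\sigma)\ge\ldots$ for superharmonic-type potentials, proved via a maximum-principle argument with $b$ entering through the WMP. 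Since $q_i\le q_1$, the constants are bounded below by those for $q_1$. Hence $\I_{2\alpha}\sigma_i \le C\|u\|_\infty^{1-q_i}<\infty$, and the same computation yields the lower estimate in \eqref{eq:est r}.

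\emph{Sufficiency and existence.} Assume \eqref{cond:rieszbdd}. We build the solution by monotone iteration. Start from $u_0 = \I_{2\alpha}\sigma_0$ (or from the lower bound, which is a subsolution) and set $u_{k+1}=\sum_i \I_{2\alpha}(u_k^{q_i}d\sigma_i)+\I_{2\alpha}\sigma_0$. This sequence is nondecreasing by induction. For the supersolution, take $v = C^{q_1}\sum_i\I_{2\alpha}\sigma_i+\I_{2\alpha}\sigma_0$ with $C=\conref{c:ub-gen3}\ge1$. Then $v\le C^{q_1}\sum\|\I_{2\alpha}\sigma_i\|_\infty + \|\I_{2\alpha}\sigma_0\|_\infty\le C^{q_1}\,C^{1-q_1}=C$, using $C\ge1$. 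Consequently $v^{q_i}\le C^{q_i}\le C^{q_1}$, because $q_i\le q_1$ and $C\ge1$. Therefore $\sum_i\I_{2\alpha}(v^{q_i}d\sigma_i)+\I_{2\alpha}\sigma_0\le v$, so $v$ is a supersolution, and $u_k\le v$ for all $k$. Monotone convergence gives a solution $u$ with $u\le v$, which is the upper bound in \eqref{eq:est r}. Positivity follows from the lower bound, since $\sigma_i\not\equiv0$ makes $\I_{2\alpha}\sigma_i>0$ everywhere. The boundary condition $\liminf_{x\to\infty}u=0$ holds because $u\le v$ and $\liminf_{x\to\infty}\I_{2\alpha}\sigma=0$ for finite Riesz potentials. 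Minimality holds because any supersolution dominates each $u_k$.

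\emph{Uniqueness.} This is the main obstacle. Let $w$ be another bounded solution. Then $u\le w$ by minimality (the lower bound shows $w$ is nontrivial). To show $w\le u$, we would use the sublinear comparison trick. Both $u$ and $w$ are comparable to $\sum_i(\I_{2\alpha}\sigma_i)^{1/(1-q_i)}+\I_{2\alpha}\sigma_0$ from below and bounded, so $\lambda:=\inf\{t\ge1: w\le t u\}$ is finite, provided the ratio $w/u$ is bounded. Since $t^{q_i}<t$ for $t>1$, concavity gives $w\le \sum_i\I_{2\alpha}(t^{q_i}u^{q_i}d\sigma_i)+\I_{2\alpha}\sigma_0\le t^{q_1}u$, which forces $\lambda\le\lambda^{q_1}$ and hence $\lambda=1$.

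The delicate point is bounding $w/u$ where $u$ is small, e.g.\ near infinity. We would handle this by a convexity/dilation argument. Pass to $\tilde w=\min(w,tu)$-type truncations, or compare $\I_{2\alpha}(w^{q}d\sigma)$ with $\I_{2\alpha}(u^{q}d\sigma)$ using $w\le\|w\|_\infty\le C'\,(\I\sigma)^{-1}\cdot\ldots$. Alternatively, we would use the iteration $w\le \I(w^{q}\sigma)$ with $\|w\|_\infty$ to show directly that $w\le c\,u$ near infinity via the upper estimate. This ratio bound is where the real work lies in the fractional, whole-space setting.
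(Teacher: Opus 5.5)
\textbf{Verdict: there is a genuine gap in the uniqueness step.} Your necessity and existence parts follow the paper's own argument, which is Theorem \ref{bounded} with $G=k_{2\alpha}$, $b=2^{n-2\alpha}$, $H\equiv0$. Your explicit supersolution $v$ is a valid alternative to the paper's a priori bound on $\|u_j\|_\infty$. One small correction: you must start the iteration from the lower bound, checking $u_1\ge u_0$ via Lemma \ref{lem:iterated} with $s=1/(1-q_i)$. If you start from $\I_{2\alpha}\sigma_0$ and $\sigma_0\equiv0$, the iteration stays at $0$.

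You correctly reduce uniqueness to $\sup_{\R^n} w/u<\infty$, after which your $\lambda\le\lambda^{q_1}$ argument finishes; it is equivalent to the paper's iteration $w\le\kappa^{q_1^j}u$. But you never prove this ratio bound, and it cannot be extracted from the bounds you have. The two sides of \eqref{eq:est r} differ by an unbounded factor wherever $\I_{2\alpha}\sigma_i$ is small. For example, take $m=1$, $\sigma_0=0$ and $\sigma_1$ Lebesgue measure on $B(0,1)$. At infinity the lower bound decays like $|x|^{(2\alpha-n)/(1-q_1)}$, while the upper bound decays like $|x|^{2\alpha-n}$. Your truncation, dilation and ``iteration with $\|w\|_\infty$'' suggestions are not arguments and do not produce a comparison constant.

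The missing idea is a pair of \emph{matching} two-sided bounds. The paper gets these from Verbitsky's intrinsic potential $\K\sigma$ together with the fact that $k_{2\alpha}$ is quasi-metric, with constant $h$.

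\emph{Lower bound.} For a nontrivial supersolution $u$, set $v_i=\I_{2\alpha}(u^{q_i}d\sigma_i)\le u$. Then $v_i\ge\I_{2\alpha}(v_i^{q_i}d\sigma_i)$. Lemmas \ref{lem:Gu>GP} and \ref{symmetricWMP} give $\I_{2\alpha}(u^{q_i}d\sigma_i)\ge\I_{2\alpha}(v_i^{q_i}d\sigma_i)\ge c\,[(\I_{2\alpha}\sigma_i)^{1/(1-q_i)}+\K\sigma_i]$. Hence $u\ge c\,(\sum_i[(\I_{2\alpha}\sigma_i)^{1/(1-q_i)}+\K\sigma_i]+\I_{2\alpha}\sigma_0)$.

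\emph{Upper bound.} For a bounded solution, write $u=\I_{2\alpha}\nu$ with $d\nu=\sum_i u^{q_i}d\sigma_i+d\sigma_0$. Lemma \ref{lemupper} gives $\I_{2\alpha}((\I_{2\alpha}\nu)^{q_i}d\sigma_i)\le(2h)^{q_i}(\I_{2\alpha}\nu)^{q_i}[\I_{2\alpha}\sigma_i+(\K\sigma_i)^{1-q_i}]$. Young's inequality then lets you absorb $\frac12\I_{2\alpha}\nu$ into the left side. This is legitimate because $\I_{2\alpha}\nu=u$ is finite. The result is $u\le C\,(\sum_i[(\I_{2\alpha}\sigma_i)^{1/(1-q_i)}+\K\sigma_i]+\I_{2\alpha}\sigma_0)$.

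Combining the two gives $w\le\kappa u$ with $\kappa$ independent of $x$, which is exactly the input your uniqueness step lacks.
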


The Kato-type condition stated below was introduced in \cite{KH1,HS} in the classical setting where $\LL = -\Delta$.
We may naturally extend it as follows.

\begin{definition} \label{defkato}
Let $\dom$ be a domain with a positive Green function $G_{\LL}$.
We say that $\sigma \in \RM$ satisfies the {\em $G_{\LL}$-Kato condition}
if
\begin{equation}
	\lim_{r \to 0^+}
	\left(
	\sup_{x \in \dom}
	\int_{\dom \cap B(x,r)} G_{\LL} (x,y) \, d \sigma(y)
	\right) = 0,
\end{equation}
and 
\begin{equation}\label{Kato2}
	\lim_{r \to 0+}
	\left(
	\sup_{x \in \dom}
	\int_{\dom \setminus B(0,1/r)} G_{\mathcal{L}} (x,y) \, d \sigma(y)
	\right) = 0
   \qtext{(when $\dom$ is unbounded)}.
\end{equation}
\end{definition}

By leveraging characterizations of the $G_{\LL}$-Kato condition (see Section \ref{sec:cont}),
together with Theorem \ref{main:thm},
we obtain the following generalization of the recent finding in \cite{HS}.

\begin{theorem}\label{continuous}
Let $\dom$ be a regular domain in $\R^n$ $(n\ge2)$
with a positive Green function $G_{\LL}$
and let $f \in \CF^{+}(\bdy)$.
Then there exists a (minimal) positive  solution $u \in\CF(\cl)$ to
\begin{equation}\label{main:eq2}
\begin{cases}
 	\LL u = \sum\limits_{i=1}^{m}\sigma_{i} u^{q_{i}}+\sigma_0,
	\quad u\geq0  & \text{in } \dom, \\
	\lim \limits_{x \rightarrow y} u(x) = f(y),  & y \in \bdy,
\end{cases}
\end{equation}
if and only if $\sigma_i$ satisfies
the $G_{\LL}$-Kato condition
for all $i= 0,1,\dots,m$. 
Moreover, $u$ satisfies \eqref{eq:est i}.
If, in addition $\dom$ is a bounded uniform domain or
a uniform cone (when $\LL= -\Delta$ and $n\geq3$),
then the positive continuous solution is unique.
\end{theorem}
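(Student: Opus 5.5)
We plan to derive Theorem \ref{continuous} from Theorem \ref{main:thm} together with a characterization of the $G_{\LL}$-Kato condition. The expected characterization, which we will establish in Section \ref{sec:cont}, is the following: for $\sigma \in \RM$, the condition holds if and only if $\LGP\sigma$ extends continuously to $\cl$ with boundary values $0$. More generally, for bounded Borel $g\ge0$, the potential $\LGP(g\,d\sigma)$ should then also be continuous on $\cl$ and vanish on $\bdy$.

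For the direction that the condition implies such an extension, we split $G_{\LL}(x,\cdot)$ into a part near $x$, a part near infinity, and a compact remainder. The Kato limits make the first two parts uniformly small. The remainder is continuous in $x$, since $G_{\LL}$ is continuous off the diagonal. It also tends to $0$ as $x\to\bdy$, because $\Omega$ is regular and $G_{\LL}(\cdot,y)$ vanishes continuously at regular boundary points, uniformly for $y$ in compacts. For the converse, we use that $\LGP(\sigma|_{B(x,r)})$ is dominated by $\LGP\sigma$. Uniform continuity of $\LGP\sigma$ on the compact set $\cl$, combined with a Dini-type argument, should then give the uniform smallness in $r$.

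\emph{Sufficiency.} Assume every $\sigma_i$ satisfies the $G_{\LL}$-Kato condition. Each $\LGP\sigma_i$ is then continuous on the compact set $\cl$ and hence bounded, so condition \eqref{cond:greenbdd} holds. Theorem \ref{main:thm} now yields a minimal positive bounded solution $u$ satisfying \eqref{eq:est i}. We then show that $u\in\CF(\cl)$. Since $u$ is bounded, each $g_i:=u^{q_i}$ is bounded, so by the characterization above $\LGP(u^{q_i}d\sigma_i)$ is continuous on $\cl$ with zero boundary values. The same holds for $\LGP\sigma_0$. Finally, $H_f$ is continuous on $\cl$ with $H_f=f$ on $\bdy$, by regularity of $\Omega$ (including the point $\infty$ when $\Omega$ is unbounded, with $f$ continuous there). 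Substituting into \eqref{inteq1} shows that $u\in\CF(\cl)$ and $\lim_{x\to y}u(x)=f(y)$.

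\emph{Necessity.} Suppose a positive solution $u\in\CF(\cl)$ exists. Since $u$ is bounded and $u$ and $H_f$ share boundary values, the identity \eqref{inteq1} shows that $w:=u-H_f=\sum_i\LGP(u^{q_i}d\sigma_i)+\LGP\sigma_0$ is continuous on $\cl$ and vanishes on $\bdy$. Each summand is lower semicontinuous and nonnegative, and their sum $w$ is continuous. A sum of lower semicontinuous functions that is continuous forces each summand to be upper semicontinuous as well, since each equals $w$ minus the others. Hence each summand is continuous on $\cl$ with zero boundary values, and by the characterization $u^{q_i}d\sigma_i$ and $\sigma_0$ satisfy the Kato condition.

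The remaining step is to pass the Kato condition from $u^{q_i}d\sigma_i$ to $\sigma_i$. By \eqref{eq:est i}, $u\ge c\,(\LGP\sigma_i)^{1/(1-q_i)}$, but this bound degenerates near the boundary, so it does not give a uniform lower bound on $u$. This is the main obstacle. The near-boundary part of the Kato integral must be controlled without one, and we plan to try two strategies.

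\emph{First strategy (comparison).} Compare $u$ with the minimal solution of the problem with $f=0$ and data restricted to $\sigma_i$. Then use an iteration of the inequality $\LGP(\LGP(\sigma)^{s}\sigma)$-type lower bounds to show that boundedness and vanishing of $\LGP(u^{q}\sigma)$ force $\LGP\sigma$ to vanish at the boundary.

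\emph{Fallback (localization).} Work on an exhaustion and argue directly on small balls. There $u\ge\delta>0$ holds away from the zero set of $f$, and near that zero set we use $u^{q_i}\ge u$ together with the estimate above. We expect this localization argument, mirroring \cite{HS}, to be the delicate part.

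Uniqueness in uniform domains and cones follows directly from the uniqueness part of Theorem \ref{main:thm}, since continuous solutions are bounded.
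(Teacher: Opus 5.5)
Your sufficiency argument, the first half of the necessity argument, and the uniqueness step all agree with the paper. In that first half, each lower semicontinuous summand of $u-H_f$ is continuous, so $\sigma_0$ and $u^{q_i}d\sigma_i$ satisfy the $G_{\LL}$-Kato condition. The gap is the step you yourself call the main obstacle: passing the Kato condition from $u^{q_i}d\sigma_i$ to $\sigma_i$. You list two strategies but complete neither. The first, as stated, only aims at $\LGP\sigma_i\to0$ on $\bdy$, which by itself is not the Kato condition; interior continuity of $\LGP\sigma_i$ would still need proof before you could invoke the characterization. Also, the obstacle you name is not real: no uniform lower bound on $u$ is needed.

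The missing idea is to localize the measure before applying the Brezis--Kamin lower bound. Fix $z\in\cl$ and $r>0$, and set $\omega:=(\sigma_i)_{\dom\cap B(z,r)}$. Since $u\ge\LGP(u^{q_i}d\sigma_i)\ge\LGP(u^{q_i}d\omega)$, $u$ is a supersolution for $\omega$. Lemma~\ref{lem:Gu>GP} with $b=1$ (the SMP for $G_{\LL}$) then gives
\[
\sup_{\dom}\LGP\omega\le(1-q_i)^{-1}\Big(\sup_{\dom}\LGP(u^{q_i}d\omega)\Big)^{1-q_i}.
\]
The right-hand side tends to $0$ as $r\to0^+$ by Lemma~\ref{lem:sup} applied to $u^{q_i}d\sigma_i$. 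Lemma~\ref{lem:sup} applied to $\sigma_i$ then yields the Kato condition, treating interior points, finite boundary points and $\infty$ at once.

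In your own terms, the bound $u\ge c\,(\LGP\sigma_i)^{1/(1-q_i)}$ that you thought degenerate already gives $u^{q_i}\ge c^{q_i}(\LGP\omega)^{q_i/(1-q_i)}$, since $\LGP\sigma_i\ge\LGP\omega$. Lemma~\ref{lem:iterated} with $s=1/(1-q_i)$ turns this into $\LGP(u^{q_i}d\omega)\ge c^{q_i}(1-q_i)(\LGP\omega)^{1/(1-q_i)}$. The degeneracy is harmless because the lower bound is compared, at the same point, with a potential of the same localized measure. The comparison with the $f=0$ problem and the localization near the zero set of $f$ are unnecessary.
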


\subsection{Outline}
The remainder of this article is structured as follows. In Section \ref{sect:bddsol}, we investigate bounded solutions to the integral equation \eqref{inteq_gen} and provide the proofs of Theorems \ref{main:thm} and \ref{fractional}. Section \ref{sec:cont} is devoted to the proof of Theorem \ref{continuous}, where we present two distinct approaches for constructing a positive continuous solution to \eqref{main:eq2}.

\section{Bounded Solutions to Sublinear Problems}\label{sect:bddsol}
We begin this section by collecting essential ingredients for our analysis.
In this section, unless otherwise stated,
we denote by $\dom$ an arbitrary domain in $\R^n$ ($n \geq 1$). 

The first lemma provides an iterated pointwise estimate for potentials associated with WMP kernels.

\begin{lemma}[\cite{GV}]\label{lem:iterated}
Suppose $G$ is a kernel on $\dom \times \dom$
satisfying the \textup{WMP} with constant $b$. 
Let $\sigma \in \RM$ and $s \geq 1$.
Then
\[
	(\GP\sigma)^{s}
  \leq
  	s b^{s-1} \, \GP \big( (\GP\sigma)^{s-1} d\sigma \big)
  \qtext{in } \dom.
\]
\end{lemma}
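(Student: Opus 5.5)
We first reduce to compactly supported measures and then pass to the limit. For general $\sigma$ we choose compact sets $K_j \uparrow \dom$ and set $\sigma_j = \sigma|_{K_j}$. Then $\GP\sigma_j \uparrow \GP\sigma$ by monotone convergence, and the right-hand side is monotone in $j$ as well, since both $(\GP\sigma_j)^{s-1}$ and $\sigma_j$ increase. So it suffices to treat the case $\GP\sigma$ bounded with $\sigma$ compactly supported. Further truncation (restricting $\sigma$ to sets where $\GP\sigma\le N$, as in Lusin-type arguments) brings us there; the case $s=1$ is trivial.

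\textbf{Core estimate.} Fix $x\in\dom$. We use the layer-cake/chain-rule idea: for $s\ge1$,
\[
t^s = s\int_0^t \tau^{s-1}\,d\tau .
\]
Apply this with $t = \GP\sigma(x) = \int G(x,y)\,d\sigma(y)$. Order the mass of $\sigma$ by the level of the potential $\GP\sigma(y)$ at points $y$ of the support, and set
\[
\sigma_\lambda = \sigma|_{\{y:\ \GP\sigma(y)\le \lambda\}} .
\]
The key inequality is that on $\operatorname{supp}\sigma_\lambda$ we have $\GP\sigma_\lambda \le \GP\sigma \le \lambda$. Indeed, by lower semicontinuity, on the closure of $\{\GP\sigma\le\lambda\}$ one still has $\GP\sigma\le\lambda$. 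Hence, by the WMP applied to $\sigma_\lambda/\lambda$, we get $\GP\sigma_\lambda \le b\lambda$ everywhere in $\dom$.

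Next we write
\[
\GP\sigma(x) = \int_0^\infty d_\lambda\, \GP\sigma_\lambda(x),
\]
a Stieltjes integral in $\lambda$ of the nondecreasing function $\lambda\mapsto \GP\sigma_\lambda(x)$. This gives
\[
(\GP\sigma(x))^s = \int_0^\infty s\,(\GP\sigma_\lambda(x))^{s-1}\, d_\lambda \GP\sigma_\lambda(x),
\]
where we use the chain rule for Stieltjes integrals. Since $s\ge1$ and the function is monotone, the jumps satisfy $a^s - c^s \le s a^{s-1}(a-c)$, so the inequality goes the right way at jumps. Bounding $(\GP\sigma_\lambda(x))^{s-1}\le (b\lambda)^{s-1}$ yields
\[
(\GP\sigma(x))^s \le s\, b^{s-1}\int_0^\infty \lambda^{s-1}\, d_\lambda \GP\sigma_\lambda(x).
\]
Now
\[
\int_0^\infty \lambda^{s-1}\, d_\lambda \GP\sigma_\lambda(x) = \int G(x,y)\,(\GP\sigma(y))^{s-1}\,d\sigma(y) = \GP\big((\GP\sigma)^{s-1}d\sigma\big)(x),
\]
because the measure $d_\lambda \sigma_\lambda$ is exactly the pushforward that places the mass $d\sigma(y)$ at the level $\lambda=\GP\sigma(y)$.

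\textbf{Main obstacle.} The delicate step is the support issue in the WMP: we need $\GP\sigma_\lambda\le\lambda$ on $\operatorname{supp}\sigma_\lambda$, not just $\sigma_\lambda$-a.e. Lower semicontinuity gives that $\{\GP\sigma\le\lambda\}$ is closed, which handles this, provided $\sigma_\lambda$ lives on that closed set. A secondary point is justifying the Stieltjes chain rule with jumps (atoms of the level distribution). We handle this via the elementary inequality $a^s-c^s\le s a^{s-1}(a-c)$ for $a\ge c\ge0$, applied on a partition and followed by passage to the limit.
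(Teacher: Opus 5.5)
The paper does not prove this lemma; it quotes it from \cite{GV}. Your argument is correct and is essentially the standard Grigor'yan--Verbitsky proof: you restrict $\sigma$ to the relatively closed level sets $\{\GP\sigma\le\lambda\}$, use the WMP to get $\GP\sigma_\lambda\le b\lambda$ on all of $\dom$, and combine the Stieltjes chain-rule inequality for the right-continuous nondecreasing map $\lambda\mapsto\GP\sigma_\lambda(x)$ with the pushforward identity.

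Two small points. The identity $\lim_{\lambda\to\infty}\GP\sigma_\lambda(x)=\GP\sigma(x)$ requires $\sigma(\{\GP\sigma=+\infty\})=0$; when this fails, the right-hand side is $+\infty$ for $s>1$ because $G>0$, so the inequality holds trivially. Also, your preliminary reduction to compactly supported $\sigma$ with bounded potential is unnecessary, since the level-set argument applies directly to every $\sigma\in\RM$.
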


The next lemma establishes pointwise lower bounds
for supersolutions of sublinear integral equations involving WMP kernels.

\begin{lemma}[\cite{GV}] \label{lem:lowerbound}
Suppose $G$ is a kernel on $\dom \times \dom$
satisfying the \textup{WMP} with constant $b$.
Let $0<q<1$ and $\sigma \in \RM$. 
If $u \in L^{q}_{\rm loc}(\dom, \sigma)$
is a nontrivial nonnegative function satisfying
\begin{equation}\label{supersoln}
	u \geq \GP (u^{q}d\sigma) \qtext{in } \dom,
\end{equation}
then
\[
	u
  \geq
  	(1-q)^{\frac{1}{1-q}} b^{\frac{-q}{1-q}}
	\left( \GP \sigma \right)^{\frac{1}{1-q}}
  \qtext{in } \dom.
\]
\end{lemma}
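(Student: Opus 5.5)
We prove Lemma \ref{lem:lowerbound} by comparing $u$ with a power of the potential $\GP\sigma$, approximating $\sigma$ by measures on which this potential is bounded. Lemma \ref{lem:iterated} supplies the reverse inequality needed for the comparison.

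\emph{Step 1: normalisation.} Since $u$ is nontrivial and $u\ge \GP(u^q d\sigma)$, with $G>0$, we get $u>0$ wherever $\GP(u^q d\sigma)>0$. If $u^q\sigma\equiv 0$, then $\sigma$ is concentrated on $\{u=0\}$. We will have to rule this out, or handle it separately (there $u\ge 0$ still holds); the cleanest way is to work with $u_\varepsilon=u+\varepsilon$ or with $\GP(u^q\,d\sigma)$ itself, and to restrict to measures $\sigma_k=\chi_{K_k}\sigma$. Here $K_k$ are compact sets on which $u\ge 1/k$ and $\GP\sigma_k\le k$; these exist by Lusin-type restriction and lower semicontinuity, and the WMP then makes $\GP\sigma_k$ bounded on $\dom$. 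By monotone convergence it suffices to prove the inequality with $\sigma$ replaced by $\sigma_k$, keeping $u\ge\GP(u^q d\sigma_k)$.

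\emph{Step 2: the key comparison.} Set $s=1/(1-q)\ge1$ and $v=\GP\sigma_k$. Lemma \ref{lem:iterated} gives
\[
v^{s}\le s\,b^{s-1}\,\GP\bigl(v^{s-1}d\sigma_k\bigr)=s\,b^{s-1}\,\GP\bigl(v^{\frac{q}{1-q}}d\sigma_k\bigr).
\]
The plan is to show $u\ge c\,v^{s}$ with $c=(1-q)^{s}b^{-qs}$. I would use the standard ``lowest constant'' argument. Let $\lambda=\sup\{t\ge0: u\ge t\,v^{s} \text{ on } \operatorname{supp}\sigma_k\}$. We have $\lambda>0$ because $u\ge 1/k$ on $K_k$ and $v$ is bounded. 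Then
\[
u\ge \GP(u^q d\sigma_k)\ge \lambda^{q}\GP\bigl(v^{\frac{q}{1-q}}d\sigma_k\bigr)\ge \lambda^q s^{-1}b^{1-s}v^{s}.
\]
Hence $\lambda\ge \lambda^q (1-q) b^{1-s}$, that is, $\lambda^{1-q}\ge(1-q)b^{-q/(1-q)}$. This yields $\lambda\ge(1-q)^{s}b^{-qs^2}$.

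This constant is not exactly the claimed one. The claimed constant $(1-q)^{s}b^{-qs}$ suggests a sharper route: iterate $u_{j+1}=\GP(u_j^q d\sigma_k)$ starting from the bound on $\operatorname{supp}\sigma_k$, and apply the WMP only once, to pass from the support to all of $\dom$. Concretely, run the argument with the SMP-type inequality on $\operatorname{supp}\sigma_k$ (valid since the iterated lemma holds there with the same form). Alternatively, follow Grigor'yan--Verbitsky and use the convexity inequality $(\GP\sigma)^s\le s\,\GP((\GP\sigma)^{s-1}d\sigma)$ on the support combined with one application of WMP. The constants would then be tracked so that the exponent of $b$ is $-q/(1-q)$.

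\emph{Step 3: passing to the limit.} Once $u\ge c\,(\GP\sigma_k)^{s}$ holds on $\dom$ (the last display holds everywhere, not just on the support), let $k\to\infty$. Monotone convergence gives $\GP\sigma_k\uparrow\GP\sigma$ on the set where $u>0$ $\sigma$-a.e. The main obstacle is the set $\{u=0\}$, where $\sigma$ may carry mass. If $\sigma(\{u=0\})>0$ and that mass is not captured, the bound fails in general. So one must show that such mass forces $u\equiv 0$ (contradicting nontriviality) or is irrelevant. I would try to handle this by proving positivity of $u$: if $u(x_0)>0$, then $\GP(u^qd\sigma)$ is nonzero, so $u>0$ everywhere by positivity of $G$. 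It then remains only to obtain sharp constants, which is the real technical step.
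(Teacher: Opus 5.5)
\textbf{There is a genuine gap: your argument proves a weaker constant than the one stated.} Step~2 is correct, but the bound $\lambda\ge\lambda^{q}(1-q)b^{1-s}$ only yields
\[
u\ge(1-q)^{\frac{1}{1-q}}\,b^{-\frac{q}{(1-q)^{2}}}\,(\GP\sigma)^{\frac{1}{1-q}} .
\]
This loss is built into any scheme based on Lemma~\ref{lem:iterated}. Whether you take a lowest constant or iterate $c_{j+1}=c_j^{q}(1-q)b^{1-s}$, you end at the fixed point $\big((1-q)b^{1-s}\big)^{1/(1-q)}$. So the factor $b^{1-s}=b^{-q/(1-q)}$ produced by the iterated inequality is always raised to the power $s=1/(1-q)$. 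For $b=1$, i.e.\ for $G_{\LL}$, this is the stated constant, and the weaker exponent is all that the constant in Theorem~\ref{bounded} actually uses. It is still not the lemma as stated.

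Your proposed remedies do not close this. Lemma~\ref{lem:iterated} is not available with $b=1$ on $\operatorname{supp}\sigma$. In its proof the WMP is what bounds $\GP(\chi_{\{\GP\sigma\le t\}}\sigma)(x)$ at points with $\GP\sigma(x)>t$, and lying in $\operatorname{supp}\sigma$ does not remove that need. For instance, the two-point kernel $G(a,a)=G(c,c)=G(c,a)=1$, $G(a,c)=5$ with $\sigma=\delta_a+\delta_c$ violates $(\GP\sigma)^2\le 2\,\GP((\GP\sigma)\,d\sigma)$ at $a$. The other suggestions are not made precise. So the step you call ``the real technical step'' is exactly what is missing.

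\textbf{The missing idea} is to stop comparing $u$ with powers of $\GP\sigma$, and instead use the WMP once, on sublevel sets of $v:=\GP\omega$, where $\omega:=u^{q}\,d\sigma$.

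First, nontriviality must be read as $\omega\not\equiv0$. Your Step~3 claim that $u(x_0)>0$ forces $\GP(u^q\,d\sigma)\not\equiv0$ is false. For example, $\sigma=\delta_{x_0}$, $u=\chi_{\dom\setminus\{x_0\}}$, $G=G_{\LL}$ satisfies the hypothesis but not the conclusion.

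With $\omega\not\equiv0$, the argument runs as follows.
\begin{itemize}
\item Since $G>0$, $v>0$ on $\dom$, so $u\ge v>0$ and $d\sigma=u^{-q}\,d\omega\le v^{-q}\,d\omega$.
\item As $v$ is lower semicontinuous, $F_t:=\{v\le t\}$ is closed. It contains $\operatorname{supp}(\chi_{F_t}\omega)$, and $\GP(\chi_{F_t}\omega)\le v\le t$ on it. Hence $\GP(\chi_{F_t}\omega)\le bt$ on $\dom$ by the WMP.
\item Since $\{v^{-q}>\lambda\}\subset F_{\lambda^{-1/q}}$, the layer-cake formula gives
\begin{align*}
\GP\sigma(x)&\le\GP\big(v^{-q}\,d\omega\big)(x)
=\int_0^\infty\GP\big(\chi_{\{v^{-q}>\lambda\}}\,\omega\big)(x)\,d\lambda\\
&\le\int_0^\infty\min\big\{v(x),\,b\lambda^{-1/q}\big\}\,d\lambda
=\frac{b^{q}}{1-q}\,v(x)^{1-q}.
\end{align*}
\end{itemize}
This is precisely $u\ge v\ge(1-q)^{\frac{1}{1-q}}b^{-\frac{q}{1-q}}(\GP\sigma)^{\frac{1}{1-q}}$.

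This route also makes your approximation by $\sigma_k$ unnecessary. As written, that step is not justified anyway. Lower semicontinuity does not show that compact sets $K_k$ with $\GP\sigma_k$ bounded can be chosen to exhaust $\sigma$ on $\{u>0\}$. What does show it is the hypothesis itself, via $\GP(\chi_{\{u\ge1/k\}}\sigma)\le k^{q}u$.
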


As in \cite[(3.6)]{VE1}, the above two lemmas yield the following.

\begin{lemma}[\cite{VE1}]\label{lem:Gu>GP}
Under the same assumption as in Lemma \ref{lem:lowerbound},
we have
\[
	\GP(u^q d\sigma)
  \ge
  	(1-q)^{\frac{1}{1-q}}b^{\frac{-q}{1-q}} (\GP\sigma)^{\frac{1}{1-q}}
  \qtext{in } \dom,
\]
whenever $u \in L^{q}_{\rm loc}(\dom, \sigma)$
is a nontrivial nonnegative function satisfying \eqref{supersoln}.
\end{lemma}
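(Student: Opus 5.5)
The plan is to combine Lemma \ref{lem:lowerbound} with Lemma \ref{lem:iterated}, applying the latter to the measure $u^q\,d\sigma$ rather than to $\sigma$. Set $v := \GP(u^q d\sigma)$, so that $u \ge v$ in $\dom$ by \eqref{supersoln}. First we apply Lemma \ref{lem:iterated} with $s = \frac{1}{1-q}$, so $s - 1 = \frac{q}{1-q}$, to the measure $\sigma$. This gives
\[
	(\GP\sigma)^{\frac{1}{1-q}}
  \le
  	\tfrac{1}{1-q}\, b^{\frac{q}{1-q}}\, \GP\big((\GP\sigma)^{\frac{q}{1-q}} d\sigma\big)
  \qtext{in } \dom .
\]

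Next we insert the lower bound from Lemma \ref{lem:lowerbound}. With $c := (1-q)^{\frac{1}{1-q}} b^{\frac{-q}{1-q}}$, that lemma yields $(\GP\sigma)^{\frac{1}{1-q}} \le c^{-1} u$ pointwise. Raising to the power $q$ gives
\[
	(\GP\sigma)^{\frac{q}{1-q}} \le c^{-q} u^q
\]
$\sigma$-a.e. (indeed everywhere in $\dom$). Substituting this into the integrand and using positivity of the kernel, we get
\[
	(\GP\sigma)^{\frac{1}{1-q}}
  \le
  	\tfrac{1}{1-q}\, b^{\frac{q}{1-q}}\, c^{-q}\, \GP(u^q d\sigma).
\]
The constant is
\[
	\tfrac{1}{1-q}\, b^{\frac{q}{1-q}}\,(1-q)^{-\frac{q}{1-q}}\, b^{\frac{q^2}{1-q}}
  = (1-q)^{-\frac{1}{1-q}}\, b^{\frac{q(1+q)}{1-q}}.
\]
This proves the inequality, but with a power of $b$ worse than the one claimed.

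The main obstacle is therefore obtaining the sharp constant $b^{-q/(1-q)}$, and there are two ways to handle it. One is to note that in the intended applications (Green kernels with SMP, where $b=1$) the constants coincide. For general $b$, the route I would try first is to rerun the \cite{GV} argument for Lemma \ref{lem:lowerbound} directly on $v$. That argument is an iteration or comparison in which the WMP constant enters only through a single application of the maximum principle to $\GP(u^q d\sigma)$. Since $u \ge v$ gives $v \ge \GP(v^q d\sigma)$, the function $v$ is itself a nontrivial supersolution of \eqref{supersoln}. Provided $v \in L^q_{\rm loc}(\sigma)$, which holds because $v \le u$, Lemma \ref{lem:lowerbound} applied directly to $v$ yields
\[
	v \ge (1-q)^{\frac{1}{1-q}} b^{\frac{-q}{1-q}} (\GP\sigma)^{\frac{1}{1-q}},
\]
which is exactly the claim.

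This last observation is in fact the cleanest proof. The remaining things to verify are that $v$ is nontrivial and that it is not identically $+\infty$ where needed. Nontriviality holds because $u$ is nontrivial and $u \ge \GP(u^q d\sigma)$; if $v \equiv 0$, the claim holds trivially in the degenerate case or contradicts the setting. The monotonicity $v^q \le u^q$ is immediate from $v \le u$.
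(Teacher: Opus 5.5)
Your final argument is correct and gives exactly the stated constant. From $v:=\GP(u^q\,d\sigma)\le u$ you get $\GP(v^q\,d\sigma)\le\GP(u^q\,d\sigma)=v$. Since $0\le v\le u$, also $v\in L^q_{\rm loc}(\dom,\sigma)$. So Lemma \ref{lem:lowerbound} applied to $v$ is the whole proof.

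The paper gives no argument of its own; it only says that Lemmas \ref{lem:iterated} and \ref{lem:lowerbound} yield the claim as in Verbitsky's (3.6). Read literally, that combination is your first computation. As you correctly found, it only produces the factor $(1-q)^{\frac{1}{1-q}}b^{-\frac{q(1+q)}{1-q}}$. That weaker constant would still suffice for every later use in the paper:
\begin{itemize}
\item the Green kernels there have $b=1$;
\item by \eqref{eq:c_5}, $\conref{c:lb-gen2}\le(1-q_i)^{\frac{1}{1-q_i}}b^{-\frac{q_i}{(1-q_i)^2}}\le(1-q_i)^{\frac{1}{1-q_i}}b^{-\frac{q_i(1+q_i)}{1-q_i}}$, because $\frac{1}{1-q_i}\ge 1+q_i$ and $b\ge1$.
\end{itemize}
Your self-supersolution route is nonetheless better: it uses only Lemma \ref{lem:lowerbound} and reproduces the sharp constant $b^{-\frac{q}{1-q}}$ as stated.

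Two things to tighten. First, drop the speculation about where the WMP constant enters the Grigor'yan--Verbitsky argument; it is not needed. Second, fix the nontriviality discussion. If $v\equiv0$ while $\sigma\not\equiv0$, the claim does not ``hold trivially'': since $G>0$, $\GP\sigma>0$ everywhere, so the inequality would fail. ``Nontrivial'' has to mean $\sigma(\{u>0\})>0$. Under a weaker reading Lemma \ref{lem:lowerbound} itself fails, because a $u$ vanishing $\sigma$-a.e.\ satisfies \eqref{supersoln} but cannot dominate the everywhere positive function $(\GP\sigma)^{\frac{1}{1-q}}$. With the correct reading, $u^q\,d\sigma$ is a nonzero measure, so $v>0$ everywhere in $\dom$ and $v$ is nontrivial in the same sense. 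The case $\sigma\equiv0$ is trivial.
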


A kernel $G$ on $\dom \times \dom$
is said to be {\em quasi-symmetric} (QS)
if there exists a  constant $a \geq 1$ such that 
\[
	a^{-1} G(y,x) \leq G(x,y) \leq a G(y,x),
  \quad \forall x,y \in \dom.
\]
In the case $a =1$, $G$ is said to be {\em symmetric}. 

We say that a symmetric kernel $G$ on $\dom \times \dom$
is {\em quasi-metric} (QM) if
$d(x,y) := 1/G(x,y)$ satisfies the quasi-triangle inequality
\[
	d(x,y) \leq h \, [d(x,z) + d(z,y)],
  \quad \forall x,y,z \in \dom,
\]
with quasi-metric constant $h$.
Without loss of generality,
we may assume that $d$ is nontrivial, i.e.,
$d(x,y) \neq 0$ for some $x, y \in \dom$,
and thus  $h \geq 1/2$.
It was shown in \cite[Lemma 2.1]{VE1} (see also \cite[Lemma 3.5]{QV})
that a QM kernel with quasi-metric constant $h$
satisfies the WMP with constant $b=2h$.
On $\dom = \R^n$,
the Riesz kernel of order $0<\alpha<n/2$
is QM (see \cite{VE1}).


Given a kernel $G$ on $\dom \times \dom$,
we define
\[
	B_{G}(x, r)
  =
  	\left\{ y \in \dom: G(x,y) > \frac{1}{r} \right\}
  \quad \text{for } x \in \dom \text{ and }  r > 0.
\]
Let $B:=B_G(x,r)$, $\sigma \in \RM$ and $0 < q < 1$.
By $\varkappa(B)$,
we denote the least constant $\con$ of
the following localized version of the weighted norm 
inequalities of $(1,q)$-type \cite{QV}: 
\begin{equation}\label{endpointinq}
 	\| \GP \nu \|_{L^{q}(\dom,\,\sigma_B)}
  \leq
  	\con \| \nu \|, \quad \forall \nu \in \RM,
\end{equation}
where $\sigma_{B}$ is the restriction of $\sigma$ to $B$.  
Then the intrinsic nonlinear potential $\K \sigma$,
introduced in \cite{VE1}, is defined by
\[
	\K \sigma (x)
  =
  	\int_{0}^{\infty} \frac{[\varkappa(B_{G}(x,r))]^{\frac{q}{1-q}}}{r^2} \, dr,
  \quad x \in \dom.
\]

The next lemma provides an upper pointwise estimate for potentials associated with QM kernels.

\begin{lemma}[\cite{VE1}] \label{lemupper}
Suppose $G$ is a \textup{QM} kernel on $\dom \times \dom$
with quasi-metric constant $h$.
Let $\nu, \sigma \in \RM$ and $0<q<1$.
Then
\[
	\GP\big( (\GP\nu)^q d\sigma \big)
  \leq
  	(2h)^q (\GP \nu)^q \left[ \GP\sigma+(\K \sigma)^{1-q} \right]
  \quad \text{in } \dom.
\]
\end{lemma}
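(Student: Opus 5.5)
The plan is to fix $x \in \dom$ and decompose $\GP\big((\GP\nu)^q d\sigma\big)(x)$ with the layer-cake formula in the kernel variable. Then, at each scale, I will split $\nu$ into a part near $x$ and a part far from $x$. The far part will produce the $\GP\sigma$ term, and the near part, controlled by $\varkappa$, will produce the $(\K\sigma)^{1-q}$ term.

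\emph{Layer-cake formula.} Since $G(x,y)=\int_0^\infty \chi_{\{G(x,y)>1/r\}}\,dr/r^2$, Fubini gives
\[
\GP\big((\GP\nu)^q d\sigma\big)(x)=\int_0^\infty \Big(\int_{B_G(x,r)} (\GP\nu)^q\, d\sigma\Big)\frac{dr}{r^2},
\qquad
\int_0^\infty \nu(B_G(x,cr))\,\frac{dr}{r^2}= c\,\GP\nu(x)
\]
for any $c>0$.

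\emph{Geometric input.} The quasi-triangle inequality $d(x,z)\le 2h\max\{d(x,y),d(y,z)\}$ yields
\[
\min\{G(x,y),G(y,z)\}\le 2h\,G(x,z).
\]
Fix $r$, let $y\in B_G(x,r)$, and take $z\notin B_G(x,2hr)$. Then $G(x,z)\le 1/(2hr)<G(x,y)/(2h)$. So the minimum cannot be $G(x,y)$, which forces $G(y,z)\le 2h\,G(x,z)$.

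\emph{Splitting $\nu$.} Write $\nu=\nu_1+\nu_2$ with $\nu_1=\nu|_{B_G(x,2hr)}$. By the geometric input, $\GP\nu_2(y)\le 2h\,\GP\nu(x)$ for all $y\in B_G(x,r)$. By the definition \eqref{endpointinq} of $\varkappa$ with $B=B_G(x,r)$,
\[
\int_{B_G(x,r)}(\GP\nu_1)^q\,d\sigma\le \varkappa(B_G(x,r))^q\,\nu(B_G(x,2hr))^q .
\]
Using $(a+b)^q\le a^q+b^q$, the inner integral is bounded by
\[
(2h)^q(\GP\nu(x))^q\,\sigma(B_G(x,r))+\varkappa(B_G(x,r))^q\,\nu(B_G(x,2hr))^q .
\]

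\emph{Integrating in $r$.} The first term integrates against $dr/r^2$ to exactly $(2h)^q(\GP\nu)^q\,\GP\sigma(x)$. For the second term I apply Hölder's inequality with exponents $1/q$ and $1/(1-q)$ with respect to $dr/r^2$:
\[
\int_0^\infty \varkappa^q\,\nu(B_G(x,2hr))^q\,\frac{dr}{r^2}\le\Big(\int_0^\infty \nu(B_G(x,2hr))\,\frac{dr}{r^2}\Big)^q\Big(\int_0^\infty\varkappa^{\frac{q}{1-q}}\,\frac{dr}{r^2}\Big)^{1-q}.
\]
By the scaling identity above, the right-hand side equals $(2h)^q(\GP\nu(x))^q(\K\sigma(x))^{1-q}$. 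Adding the two contributions gives the claimed inequality.

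\emph{Main obstacle.} The delicate step is the choice of splitting radius $2hr$. It must be large enough that the quasi-metric property gives the uniform far-field bound $\GP\nu_2\le 2h\,\GP\nu(x)$ on $B_G(x,r)$. It must also match the $(1,q)$ inequality exactly, so that Hölder plus the scaling identity return the constant $(2h)^q$ with no loss. Measurability in $r$ and the use of Fubini are routine, since all quantities are nonnegative.
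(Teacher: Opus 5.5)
The paper gives no proof of this lemma; it quotes it from \cite{VE1}. Your argument is correct and is essentially the standard proof from that source: the layer-cake identity $\GP\mu(x)=\int_0^\infty \mu(B_G(x,r))\,dr/r^2$, a split of $\nu$ at the enlarged ball $B_G(x,2hr)$ so that the far part is at most $2h\,\GP\nu(x)$ on $B_G(x,r)$, the localized $(1,q)$-inequality for the near part, and H\"older's inequality with exponents $1/q$ and $1/(1-q)$, which together give exactly the constant $(2h)^q$. One point is worth stating explicitly: $\varkappa(B)$ in \eqref{endpointinq} is defined with $\nu$ ranging over all of $\RM$, not only over measures carried by $B$, and this is what allows you to apply it to $\nu|_{B_G(x,2hr)}$.
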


When the WMP kernel $G$ in Lemma \ref{lem:lowerbound}
is symmetric (or more generally, QS),
one can refine the lower estimates for solutions to \eqref{supersoln}
in terms of the intrinsic potential $\K\sigma$.

\begin{lemma}[{\cite[Lemma 3.2 and (3.6)]{VE1}}] \label{symmetricWMP}
Suppose
$G$ is a symmetric kernel on $\dom \times \dom$
satisfying the \textup{WMP} with constant $b$.
Let $\sigma \in \RM$ and $0<q<1$.
If $u \in L^{q}_{\rm loc}(\dom, \sigma)$
is a nontrivial nonnegative function 
satisfying \eqref{supersoln}, then
\begin{equation}\label{lowerK}
	u
  \ge
  	\GP(u^qd\sigma)
  \ge
  	(1-q)^{\frac{1}{1-q}} b^{\frac{-q}{1-q}} \K \sigma
  \qtext{in } \dom.
\end{equation}
\end{lemma}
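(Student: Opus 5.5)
The first inequality in \eqref{lowerK} is just the hypothesis \eqref{supersoln}, so the task is to show $\GP(u^q d\sigma)\ge c\,\K\sigma$ with $c=(1-q)^{\frac{1}{1-q}}b^{\frac{-q}{1-q}}$. I would start by writing the potential in layer-cake form. Since $G(x,y)=\int_0^\infty \chi_{\{G(x,y)>1/r\}}\,\frac{dr}{r^2}$, Fubini gives
\[
	\GP(u^q d\sigma)(x)=\int_0^\infty \Big(\int_{B_G(x,r)} u^q\, d\sigma\Big)\frac{dr}{r^2}.
\]
Comparing with the definition of $\K\sigma$, it then suffices to prove the localized estimate
\begin{equation*}
	\varkappa(B)^{\frac{q}{1-q}}\le C\int_B u^q\, d\sigma \qquad\text{for every } B=B_G(x,r),
\end{equation*}
with $C^{-1}=c$. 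The proof of this estimate should use only that $u\ge \GP(u^q d\sigma_B)$, which holds because $\sigma\ge\sigma_B$.

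For this localized estimate I would test \eqref{endpointinq} against an arbitrary $\nu\in\RM$. Writing $(\GP\nu)^q=(\GP\nu\,u^{q-1})^q(u^q)^{1-q}$ and applying H\"older with exponents $1/q$ and $1/(1-q)$ gives
\[
	\int (\GP\nu)^q\,d\sigma_B\le\Big(\int \GP\nu\; u^{q-1}\,d\sigma_B\Big)^{q}\Big(\int u^q\,d\sigma_B\Big)^{1-q}.
\]
By symmetry of $G$, $\int \GP\nu\, u^{q-1}d\sigma_B=\int \GP(u^{q-1}d\sigma_B)\,d\nu$. Hence everything reduces to a uniform bound
\[
	\GP(u^{q-1}d\sigma_B)\le M \quad\text{on } \dom .
\]
Granting this bound, we get $\|\GP\nu\|_{L^q(\sigma_B)}\le M\,\|\nu\|\,(\int_B u^q d\sigma)^{\frac{1-q}{q}}$. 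Therefore $\varkappa(B)\le M(\int_B u^qd\sigma)^{\frac{1-q}{q}}$, and raising this to the power $q/(1-q)$ gives the localized estimate.

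The main obstacle is the bound $\GP(u^{q-1}d\sigma_B)\le M$ with $M$ depending only on $q$ and $b$. By the WMP it suffices to bound it on $\operatorname{supp}\sigma_B$. Set $w=\GP(u^q d\sigma_B)\le u$. Since $q-1<0$, we have $u^{q-1}\le w^{q-1}$, so it is enough to control $\GP(w^{q-1}d\sigma_B)$. I would first look for a "negative power" analogue of Lemma \ref{lem:iterated}: an inequality of the type $\GP\big((\GP\mu)^{-1}d\mu\big)\lesssim$ const, applied with $d\mu=u^q d\sigma_B$. Note that $w^{q-1}d\sigma_B$ is dominated by $w^{-1}u^q\,d\sigma_B=w^{-1}d\mu$, because $u^q\ge w^q$. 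Such an inequality I expect to prove by truncation, or by a chain-rule argument on the level sets of $\GP\mu$ in the spirit of the Grigor'yan--Verbitsky iteration \cite{GV}. If this direct route stalls, my fallback is to avoid it entirely. I would replace $u$ by the minimal solution $v_B$ of $v=\GP(v^q d\sigma_B)$, constructed by monotone iteration from Lemma \ref{lem:lowerbound}, using $\int_B u^q d\sigma\ge\int_B v_B^q d\sigma$. I would then invoke the known equivalence between $\varkappa(B)^{\frac{q}{1-q}}$ and $\int v_B^q d\sigma_B$ for symmetric WMP kernels \cite{QV}, keeping track of constants so as to obtain exactly $(1-q)^{\frac{1}{1-q}}b^{\frac{-q}{1-q}}$.
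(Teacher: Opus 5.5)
Your layer-cake reduction is correct. The identity $\GP(u^q d\sigma)(x)=\int_0^\infty\big(\int_{B_G(x,r)}u^q\,d\sigma\big)\,r^{-2}\,dr$ reduces the lemma to the localized bound $\varkappa(B)^{q/(1-q)}\le(1-q)^{-1/(1-q)}b^{q/(1-q)}\int_B u^q\,d\sigma$, which uses only $u\ge\GP(u^q d\sigma_B)$. Your H\"older--symmetry step is also fine. The paper gives no proof of its own here; it quotes \cite{VE1}.

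\textbf{The gap.} It is the step you call the main obstacle. A bound $\GP(u^{q-1}d\sigma_B)\le M(q,b)$ is false. So is the inequality $\GP((\GP\mu)^{-1}d\mu)\lesssim 1$ from which you hoped to derive it, even for exact solutions and $b=1$.

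Take $\dom=(0,\infty)$ and $G(x,y)=\min\{x,y\}$, the Green function of $-d^2/dx^2$. Here $\GP\sigma$ is nondecreasing and constant to the right of $\operatorname{supp}\sigma$, so the SMP holds. Let $\mu_N=\sum_{k=1}^N 2^{-k}\delta_{2^k}$, $u=\GP\mu_N$ and $d\sigma=u^{-q}d\mu_N$. Then $u^q d\sigma=d\mu_N$, so $u=\GP(u^q d\sigma)$ is a genuine solution, and $u(2^j)=j+1-2^{j-N}\in[j,j+1)$. The set $B=B_G(2^N,1)=(1,\infty)$ contains $\operatorname{supp}\sigma$, so
\[
\GP(u^{q-1}d\sigma_B)(2^N)=\sum_{k=1}^N\frac{1}{u(2^k)}\ge\sum_{k=1}^N\frac{1}{k+1}\longrightarrow\infty .
\]
Yet $\int u^q d\sigma<1$, and since $\GP\nu(2^k)\le 2^k\|\nu\|$, we get $\varkappa(B)^q\le\sum_k 2^{-k(1-q)}$ uniformly in $N$. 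So the loss comes from the route, not the lemma. You are asking for a strong-type $L^1(u^q d\sigma_B)$ bound on $\GP\nu/u$, and that fails by a logarithm.

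\textbf{The fallback.} It does not close the gap. Quoting the equivalence of \cite{QV} is quoting exactly the localized estimate you are supposed to prove. The detour through the minimal solution $v_B$ adds nothing, since that direction holds for any supersolution, and the constant is not verified.

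\textbf{What works.} Settle for a weak-type bound and use $q<1$ only at the end. Put $d\mu=u^q d\sigma_B$ and $h=\GP\mu\le u$. For $t>0$ and compact $K\subset\{\GP\nu>t h\}$, test against an equilibrium measure $\lambda$ of $K$: $\GP\lambda\ge1$ on $K$ and $\GP\lambda\le 1$ on $\operatorname{supp}\lambda\subset K$, hence $\GP\lambda\le b$ everywhere by the WMP. Symmetry then gives
\[
\mu(K)\le\int\GP\lambda\,d\mu=\int_K h\,d\lambda\le\frac1t\int\GP\nu\,d\lambda=\frac1t\int\GP\lambda\,d\nu\le\frac{b}{t}\|\nu\|.
\]
Thus $\GP\nu/h$ lies in weak $L^1(\mu)$ with norm at most $b\|\nu\|$. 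Kolmogorov's inequality, splitting the distribution integral at $t=b\|\nu\|/\|\mu\|$, then yields
\[
\int_B(\GP\nu)^q\,d\sigma\le\int\Big(\frac{\GP\nu}{h}\Big)^q d\mu\le\frac{b^q}{1-q}\,\|\nu\|^q\Big(\int_B u^q\,d\sigma\Big)^{1-q}.
\]
This is $\varkappa(B)^{q/(1-q)}\le(1-q)^{-1/(1-q)}b^{q/(1-q)}\int_B u^q\,d\sigma$. Combined with your layer-cake step, it gives exactly the constant of the lemma, modulo the usual capacity-theoretic care in producing $\lambda$.
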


\begin{remark}\label{rem_symmetrized}
In Lemma \ref{symmetricWMP},
if the kernel $G$ is only QS
with quasi-symmetry constant $a$,
then \eqref{lowerK} remains valid
with a constant $\con=\con(q,b,a)$ in place of
$(1-q)^{\frac{1}{1-q}}b^{\frac{-q}{1-q}}$.
Indeed, the symmetrized kernel $G^s (x,y) := G(x,y) + G(y,x)$
is symmetric and satisfies
\[
	(1+ a^{-1} ) G(x,y)
  \leq
  	G^s (x,y)
  \leq
  	(1+a) G(x,y),
  \quad \forall x,y \in \dom.
\]
\end{remark}

Our key theorems in this section discuss the existence, uniqueness, and pointwise behavior of bounded solutions to \eqref{inteq_gen} under various settings of the corresponding kernel $G$.

\begin{theorem}\label{bounded}
Let $G$ be a kernel on $\dom \times \dom$ and
let $H$ be a nonnegative bounded function on $\dom$.
\begin{enumerate}[label=\textup{(\roman*)}] \itemsep=0.4em \setlength{\itemindent}{-0em}
\item\label{item:2.6-i}
Suppose $G$ satisfies the \textup{WMP} with constant $b$.
Then the following hold: \medskip
\begin{enumerate}[label=\textup{(\alph*)}] \itemsep=0.4em \setlength{\itemindent}{-0em}
\item\label{item:2.6-a}
If $\GP \sigma_i$ is bounded on $\dom$ for all $i=0,1,\dots , m$,
then there exists a minimal positive bounded solution to \eqref{inteq_gen}.
\item\label{item:2.6-b}
If \eqref{inteq_gen} admits a nontrivial bounded supersolution,
then $\GP \sigma_i$ is bounded on $\dom$ for all $i=0,1,\dots,m$.
\item\label{item:2.6-c}
All nontrivial bounded solutions $u$ to \eqref{inteq_gen}
enjoy the Brezis--Kamin type estimates
 \begin{equation}\label{eq:estimate}
	\conref{c:lb-gen2}
	\sum_{i=1}^{m} (\GP\sigma_{i})^{\frac{1}{1-q_{i}}}
	+\GP\sigma_0 + H
  \le
	u
  \le
	\conref{c:ub-gen2}^{q_{1}} \sum_{i=1}^{m} \GP\sigma_{i}
	+\GP\sigma_0 +H
  \quad \text{in } \dom,
\end{equation}
where
$\conlabel{c:lb-gen2} := (1-q_{1})^{\frac{1}{1-q_{1}}} b^{\frac{-q_1}{(1-q_1)^{2}}}$
and
\[
	\conlabel{c:ub-gen2}
  :=
  	\max\left\{1,
	\left(
		\sum_{i=0}^{m} \|\GP\sigma_{i}\|_{\infty} + \|H\|_{\infty} 
	\right)^{\frac{1}{1-q_{1}}}
	\right\}.
\]
In particular, $\| u \|_{\infty} \le \conref{c:ub-gen2}$.

\item\label{item:2.6-d}
If, in addition $G$ is symmetric,
then any nontrivial supersolution $u$ to \eqref{inteq_gen}
enjoys the lower pointwise estimate
\[
 	u
  \geq
  	\frac{\conref{c:lb-gen2}}{4}
	\left(
	\sum_{i=1}^{m}
	\left[
	(\GP  \sigma_i)^{\frac{1}{1-q_i}}
	+ \K \sigma_i + \GP(H^{q_i} d\sigma_{i})
	\right]
	+ \GP \sigma_0
	\right) + H
  \quad \text{in } \dom.
\]
Moreover, this estimate still holds
with the constant $\conref{c:lb-gen2}/4$ replaced by
$\con = \con(q_1 , b, a)$,
when $G$ is only \textup{QS} with quasi-symmetry constant $a$.
\end{enumerate}

\item\label{item:2.6-ii}
Suppose $G$ is \textup{QM} with quasi-metric constant $h$.
Then the following hold: \medskip 
\begin{enumerate}[label=\textup{(\alph*)}] \itemsep=0.4em \setlength{\itemindent}{-0em}\setcounter{enumii}{4}
\item\label{item:2.6-e}
Any subsolution $u$ to \eqref{inteq_gen}
satisfies the upper pointwise estimate
\[
	u
  \leq
  	\conref{c:ub-gennn}
	\left(
	\sum_{i=1}^{m} \left[(\GP  \sigma_i)^{\frac{1}{1-q_i}}
	+ \K \sigma_i + \GP(H^{q_i} d\sigma_{i})  \right] + \GP \sigma_0
	\right) + H
  \quad \text{in } \dom, 
\]
where $\conlabel{c:ub-gennn} := 2(8mh)^{\frac{q_1}{1-q_{1}}}$.

\item\label{item:2.6-f}
If \eqref{inteq_gen} admits a minimal positive bounded solution,
then it is the only positive bounded solution to \eqref{inteq_gen}. 
\end{enumerate}
\end{enumerate}
\end{theorem}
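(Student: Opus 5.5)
We prove the parts in the order (b), (a), (c), (d), (e), (f); each reduces to the single-term lemmas above by comparing the sum with its individual terms.

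\emph{Part (b).} Let $u$ be a nontrivial bounded supersolution with $\|u\|_\infty\le M$. Then $\GP\sigma_0\le u\le M$ directly. For $i\ge1$, the inequality $u\ge \GP(u^{q_i}d\sigma_i)$ holds, so Lemma~\ref{lem:lowerbound} gives $(\GP\sigma_i)^{1/(1-q_i)}\le C(q_i,b)\,u\le C M$. Hence each $\GP\sigma_i$ is bounded.

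\emph{Part (a).} We use monotone iteration. Put $u_0:=\GP\sigma_0+H$ and
\[
u_{k+1}:=\sum_{i=1}^m\GP(u_k^{q_i}d\sigma_i)+\GP\sigma_0+H .
\]
\begin{itemize}
\item Positivity of the kernel gives $u_1\ge u_0$, and induction shows $\{u_k\}$ is nondecreasing.
\item Let $\conref{c:ub-gen2}$ be as in (c) and put $S:=\sum_{i=0}^m\|\GP\sigma_i\|_\infty+\|H\|_\infty$. If $u_k\le C:=\conref{c:ub-gen2}\ge1$, then $u_k^{q_i}\le C^{q_1}$ because $q_i\le q_1$ and $C\ge1$. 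Therefore $u_{k+1}\le C^{q_1}S\le C^{q_1}C^{1-q_1}=C$.
\end{itemize}
Monotone convergence then shows that $u=\lim u_k$ solves \eqref{inteq_gen}, is bounded, and is positive. The degenerate case $\GP\sigma_0+H\equiv0$ needs a separate start: begin instead from $u_0:=\varepsilon(\GP\sigma_1)^{1/(1-q_1)}$, a small subsolution obtained via Lemma~\ref{lem:iterated}, or from $u_0=c(\GP\sigma_1)^{1/(1-q_1)}$.

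Minimality comes from the same induction run against an arbitrary nontrivial supersolution $\tilde u$. If $u_0\le\tilde u$, then every $u_k\le\tilde u$. In the degenerate start this base case is supplied by the lower bound of Lemma~\ref{lem:lowerbound}.

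\emph{Part (c).} The upper bound repeats the computation above. For a bounded solution, take $C:=\max\{1,\|u\|_\infty\}$. From $\|u\|_\infty\le C^{q_1}S$ we get $C^{1-q_1}\le \max(1,S)$. Substituting back into the equation yields $u\le C^{q_1}\sum_i\GP\sigma_i+\GP\sigma_0+H$.

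For the lower bound, apply Lemma~\ref{lem:Gu>GP} to each term $\GP(u^{q_i}d\sigma_i)$. This produces the constant $(1-q_i)^{1/(1-q_i)}b^{-q_i/(1-q_i)}$. Converting to the uniform constant $\conref{c:lb-gen2}$ is then a monotonicity check in $q_i\le q_1$, using $b\ge1$ and the fact that $(1-q)^{1/(1-q)}$ is decreasing in $q$. The extra exponent $1/(1-q_1)^2$ in $b$ absorbs any slack. Summing the terms together with $\GP\sigma_0+H$ gives the left side of \eqref{eq:estimate}.

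\emph{Part (d).} For symmetric $G$, Lemma~\ref{symmetricWMP} gives the $\K\sigma_i$ bound for each $i$. The bound $\GP(H^{q_i}d\sigma_i)\le\GP(u^{q_i}d\sigma_i)$ is immediate from $u\ge H$. Each term $\GP(u^{q_i}d\sigma_i)$ dominates three quantities, namely $(\GP\sigma_i)^{1/(1-q_i)}$, $\K\sigma_i$ and $\GP(H^{q_i}d\sigma_i)$, each up to a constant. Averaging these three bounds, together with the $\GP\sigma_0$ term, produces the factor $1/4$. The QS case follows from Remark~\ref{rem_symmetrized}.

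\emph{Part (e).} For a subsolution $u$, write $u\le v+H$ with $v:=\sum_i\GP(u^{q_i}d\sigma_i)+\GP\sigma_0$. Then
\[
u^{q_i}\le (v+H)^{q_i}\le v^{q_i}+H^{q_i}.
\]
Bounding $v\le \GP\nu$ with $\nu:=\sum_i u^{q_i}\sigma_i+\sigma_0$, Lemma~\ref{lemupper} gives
\[
\GP(v^{q_i}d\sigma_i)\le(2h)^{q_i}v^{q_i}\bigl[\GP\sigma_i+(\K\sigma_i)^{1-q_i}\bigr].
\]
Young's inequality with exponents $1/q_i$ and $1/(1-q_i)$ splits this product into $\tfrac12 v/m$ plus a constant times $(\GP\sigma_i)^{1/(1-q_i)}+\K\sigma_i$. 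Absorbing the resulting $v/2$ into the left side gives the stated estimate. The absorption requires $v<\infty$, so the main obstacle is this finiteness issue: I would first establish the estimate for truncated or localized versions (e.g. $\min(u,N)$ or restricted measures) and then pass to the limit.

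\emph{Part (f).} Let $u$ be the minimal positive bounded solution and $w$ any positive bounded solution, so $u\le w$ and both satisfy bounds of the form $\le C\cdot$(lower bound). I would follow Verbitsky's argument in \cite{VE1} for sublinear uniqueness, which combines concavity with a ratio estimate. Set $\lambda:=\inf u/w\in(0,1]$, which is positive by (c) and (e). Since the equation is sublinear, $\lambda w$ is a strict subsolution whenever $\lambda<1$. A quasi-metric comparison then lets one raise $\lambda$ to $\lambda^{q_1}$, and iterating forces $\lambda=1$. Positivity of $\lambda$ is the delicate point, and this is where the QM hypothesis enters through (e).
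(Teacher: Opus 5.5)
Your outline is sound up to one misattribution in (f). For (b), (c), (e) and the core of (f) it is the paper's argument; (a) and (d) take slightly different routes.

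\textbf{Part (a).} You start from $\GP\sigma_0+H$ rather than the paper's $u_0=\conref{c:lb-gen2}\sum_i(\GP\sigma_i)^{\frac{1}{1-q_i}}+\GP\sigma_0+H$. This makes the base case $u_0\le\tilde u$ trivial for every supersolution, and your invariant bound $u_k\le\conref{c:ub-gen2}$ is tidier than the paper's norm computation.

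The cost is that positivity of the limit is no longer automatic, and your case split is drawn in the wrong place. Whenever $\sigma_0\equiv0$ and $H=0$ $\sigma_i$-a.e.\ for all $i$, the iteration stalls at $u_k\equiv H$. That limit vanishes somewhere, since $\sigma_1\not\equiv0$. So your alternative start is needed in this case too, not only when $\GP\sigma_0+H\equiv0$.

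The paper's $u_0$ avoids any split. It is everywhere positive because $\sigma_i\not\equiv0$ and $G>0$. It is a subsolution by Lemma~\ref{lem:iterated}, and it lies below every nontrivial supersolution by Lemma~\ref{lem:Gu>GP}.

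\textbf{Part (d).} You apply Lemmas~\ref{lem:Gu>GP} and~\ref{symmetricWMP} directly to $u$ and use $u\ge H$ for the third term. This is valid, using $\conref{c:lb-gen2}\le1$, and even gives the constant $\conref{c:lb-gen2}/3$. The paper instead detours through $v_i=\GP(u^{q_i}d\sigma_i)$ and the inequality $(a+b)^q\ge\frac12(a^q+b^q)$; your route is shorter.

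\textbf{Part (f).} Positivity of $\lambda=\inf u/w$ does not follow from (c) and (e). The lower bound in (c) contains neither $\K\sigma_i$ nor $\GP(H^{q_i}d\sigma_i)$, so it cannot be matched against the upper bound in (e). You need (d) for $u$; it applies because a QM kernel is symmetric and satisfies the WMP with $b=2h$. This gives $w\le\kappa u$ with $\kappa=4\conref{c:ub-gennn}/\conref{c:lb-gen2}$, exactly as in the paper.

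Also, no quasi-metric comparison is involved in passing from $\lambda$ to $\lambda^{q_1}$. Since $\lambda^{q_i}\ge\lambda^{q_1}$ for $\lambda\le1$,
\[
u=\sum_{i=1}^m\GP(u^{q_i}d\sigma_i)+\GP\sigma_0+H\ge\lambda^{q_1}\Bigl(\sum_{i=1}^m\GP(w^{q_i}d\sigma_i)+\GP\sigma_0+H\Bigr)=\lambda^{q_1}w .
\]
Hence $\lambda\ge\lambda^{q_1}$, which forces $\lambda=1$. The paper iterates the same step on $\kappa$.

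\textbf{Part (e).} The paper absorbs $\GP\nu$ into the left side without comment. That is legitimate wherever $\GP\nu<\infty$, in particular in (f), where $\GP\nu=w-H$ is bounded. So the finiteness point you flag does not affect uniqueness. The truncations you suggest would not settle it in general, though: neither $\min(u,N)$ nor $u$ with restricted $\sigma_i$ need remain a subsolution, because the right-hand side decreases.
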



In arguments below, we often use the inequalities
\begin{equation}\label{eq:c_5}
  	\conref{c:lb-gen2}
  \le
  	(1-q_{i})^{\frac{1}{1-q_i}} b^{\frac{-q_i}{(1-q_i)^2}}
  \le
  	(1-q_{i})^{\frac{1}{1-q_i}} b^{\frac{-q_i}{1-q_i}},
  \quad i=1,\dots,m,
\end{equation}
which follow by $0< q_{i} \leq q_{1} < 1\le b$.

\begin{proof}[Proof of Theorem \ref{bounded}: Part \ref{item:2.6-a}]
Define a sequence of functions
$\lbrace u_{j} \rbrace_{j=0}^{\infty}$ on $\dom$ by
\[
	u_0
  =
  	\conref{c:lb-gen2}
	\sum_{i=1}^{m} (\GP\sigma_{i})^{\frac{1}{1-q_{i}}}
	+\GP\sigma_0+ H  
  \quad \text{and} \quad  
	u_{j}
  =
  	\sum_{i=1}^{m} \GP (u_{j-1}^{q_{i}}d\sigma_{i})
	+\GP\sigma_0+ H
  \quad \text{for } j \in\N. 
\]
Then
$u_0 \geq \conref{c:lb-gen2}  (\GP\sigma_{i})^{\frac{1}{1-q_{i}}}>0$
on $\dom$ by $\sigma_{i} \not\equiv 0$ and $G>0$.
Applying Lemma \ref{lem:iterated} with $s= \frac{1}{1-q_{i}}$
and \eqref{eq:c_5} yields that
\begin{equation}\label{eq:u0u1}
\begin{split}
	u_{1} 
  &=
  	\sum_{i=1}^{m} \GP(u_{0}^{q_{i}}d\sigma_{i})+\GP\sigma_0 + H \\
  &\geq
  	\sum_{i=1}^{m} \conref{c:lb-gen2}^{q_{i}}
	\GP \big( (\GP\sigma_{i})^{\frac{q_{i}}{1-q_{i}}}d\sigma_{i} \big)
	+\GP\sigma_0+ H \\ 
  &\geq
  	\sum_{i=1}^{m} \conref{c:lb-gen2}^{q_{i}}
	(1-q_{i})b^ \frac{-q_i}{1-q_{i}}(\GP\sigma_{i})^{\frac{1}{1-q_{i}}}
	+\GP\sigma_0 + H\\
  &\geq
	\conref{c:lb-gen2} \sum_{i=1}^{m} (\GP\sigma_{i})^{\frac{1}{1-q_{i}}}
	+\GP\sigma_0 + H \\
  &=
  	u_{0}.
\end{split}
\end{equation}
Moreover, by assumptions,
\[
	\| u_{0} \|_{\infty}
  \leq
  	\conref{c:lb-gen2}
	\sum_{i=1}^{m} \|\GP\sigma_{i} \|_{\infty}^{\frac{1}{1-q_{i}}}
	+ \| \GP\sigma_0 \|_{\infty} +\| H\|_{\infty} < + \infty,
\]
and thus 
\[
	\| u_{1} \|_{\infty}
   \leq
	\sum_{i=1}^{m} \|u_{0}\|_{\infty}^{q_{i}} \| \GP\sigma_{i} \|_{\infty}
	+\| \GP\sigma_0 \|_{\infty} +\|H \|_{\infty} < + \infty.
\]
Suppose there exists $j \in \N$ such that
$u_{0} \leq u_{1} \leq \cdots \leq u_{j}$ on $\dom$
with $\|u_j\|_{\infty} < + \infty$. Then
\[
	u_{j+1}
  =
  	\sum_{i=1}^{m} \GP(u_{j}^{q_{i}}d\sigma_{i})
	+ \GP\sigma_0 + H
  \geq
  	\sum_{i=1}^{m} \GP(u_{j-1}^{q_{i}}d\sigma_{i})
	+\GP\sigma_0 + H 
  =
  	u_{j},
\]
and again by assumptions,
\[
	\| u_{j+1} \|_{\infty}
   \leq
	\sum_{i=1}^{m} \|u_{j}\|_{\infty}^{q_{i}} \| \GP\sigma_{i} \|_{\infty}
	+ \| \GP\sigma_0 \|_{\infty} +\|H \|_{\infty} < +\infty.
\]
Hence, by induction,
$\lbrace u_{j} \rbrace_{j=0}^{\infty}$
is a nondecreasing sequence of positive bounded functions in $\dom$.
In addition, since
$0<q_{i} \leq q_{1}<1$ and
$0< \|u_{j-1}\|_{\infty} \leq \|u_{j}\|_{\infty} < + \infty$,
we have
\begin{equation}\label{upper sum}
\begin{split}
	\|u_{j}\|_{\infty}^{1-q_{1}}
&\le
 	\sum_{i=1}^{m}  \frac{\| u_{j-1} \|_{\infty}^{q_{i}}}{\|u_{j} \|_{\infty}^{q_{1}}}
 \| \GP\sigma_{i} \|_{\infty} + \frac{\| \GP\sigma_0 \|_{\infty}}{\|u_{j} \|_{\infty}^{q_{1}}}
	+ \frac{\| H \|_{\infty}}{\|u_{j} \|_{\infty}^{q_{1}}} \\
 &\le
	\sum_{i=1}^{m}  \frac{\| u_{j} \|_{\infty}^{q_{i}}}{\|u_{j} \|_{\infty}^{q_{1}}}
 \| \GP\sigma_{i} \|_{\infty} + \frac{\| \GP\sigma_0 \|_{\infty}}{\|u_{j} \|_{\infty}^{q_{1}}}
	+ \frac{\| H\|_{\infty}}{\|u_{j} \|_{\infty}^{q_{1}}} \\ 
 &\le \sum_{i=0}^{m} \|\GP\sigma_{i}\|_{\infty} + \|H\|_{\infty},
\end{split}
\end{equation}
where the last inequality is valid whenever $\|u_{j}\|_{\infty} >1$.
Therefore
\begin{equation}\label{upper sum 2}
	\|u_j\|_{\infty} 
	  \le
	  \max\left\{1,
	\left( \sum_{i=0}^{m} \|\GP\sigma_{i}\|_{\infty} + \| H \|_{\infty} \right)^{\frac{1}{1-q_{1}}}\right\} = \conref{c:ub-gen2}.
\end{equation}
By the monotone convergence theorem,
the pointwise limit $u = \lim\limits_{j \rightarrow \infty} u_{j}$
is a positive bounded solution to \eqref{inteq_gen}.
Minimality of the solution $u$ follows from its construction.
In fact, if $\tilde{u}$ is a nontrivial supersolution to \eqref{inteq_gen},
then $\tilde{u}\ge\GP(\tilde{u}^{q_i}d\sigma_i)$,
and so
$\GP(\tilde{u}^{q_i}d\sigma_i) \ge \conref{c:lb-gen2}(\GP\sigma_i)^{\frac{1}{1-q_i}}$
by Lemma \ref{lem:Gu>GP} and \eqref{eq:c_5}.
Therefore
\[
	\tilde{u}
  \ge
  	\sum_{i=1}^{m} \GP(\tilde{u}^{q_{i}}d\sigma_{i})
	+\GP\sigma_0 + H
  \ge
  	\conref{c:lb-gen2} \sum_{i=1}^{m} (\GP\sigma_{i})^{\frac{1}{1-q_{i}}}
	+\GP\sigma_0 + H
  =
  	u_{0}.
\]
The definition of $u_j$ and induction yield that
$u_j\leq \tilde{u}$ for all $j \in \mathbb{N}$.
Hence,
$u\leq \tilde{u}$ in $\dom$.
\end{proof}

\begin{proof}[Proof of Theorem \ref{bounded}: Part \ref{item:2.6-b}]
Let $u$ be a nontrivial bounded supersolution to \eqref{inteq_gen}.
Then $u \geq \GP\sigma_{0}$.
Also, since $u\ge\GP(u^{q_i}d\sigma_i)$,
we have $u\ge\conref{c:lb-gen2}  (\GP\sigma_{i})^{\frac{1}{1-q_{i}}}$
by Lemma \ref{lem:lowerbound}.
These imply that $\GP \sigma_i$ must be bounded on $\dom$
for all $i=0,1,\dots, m$. 
\end{proof}

\begin{proof}[Proof of Theorem \ref{bounded}: Part \ref{item:2.6-c}]
Let $u$ be any nontrivial bounded solution to \eqref{inteq_gen}.
By a similar argument to \eqref{upper sum}--\eqref{upper sum 2}
replaced $u_j$ by $u$,
we get $\|u\|_{\infty} \leq \conref{c:ub-gen2}$. 
Therefore
\begin{equation*}\label{upper}
	u
  \leq
  	\sum_{i=1}^{m} \|u\|_{\infty}^{q_i} \GP\sigma_{i}
	+ \GP\sigma_{0} + H
  \leq
  	\conref{c:ub-gen2}^{q_{1}} \sum_{i=1}^{m} \GP\sigma_{i}
	+\GP\sigma_0 + H.
\end{equation*}
Also, since $u\ge\GP(u^{q_i}d\sigma_i)$,
we can use Lemma \ref{lem:Gu>GP} and \eqref{eq:c_5} to obtain
\begin{equation*}\label{lower}
	u
  =
  	\sum_{i=1}^{m} \GP(u^{q_i} d\sigma_{i}) + \GP \sigma_0 + H
 \geq
	\conref{c:lb-gen2} \sum_{i=1}^{m}
	(\GP\sigma_{i})^{\frac{1}{1-q_{i}}}+\GP\sigma_0 + H.
\end{equation*}
These yield the desired inequalities \eqref{eq:estimate}.
\end{proof}

\begin{proof}[Proof of Theorem \ref{bounded}: Part \ref{item:2.6-d}]
Let $u$ be any nontrivial supersolution to \eqref{inteq_gen}.
For $i=1, \dots,m$,
we set $v_i := \GP(u^{q_i}d\sigma_{i})$.
Note that $v_i \le v_i + H \leq u$ in $\dom$.
Since $v_i\ge\GP(v_i^{q_i}d\sigma_i)$,
we derive from Lemmas \ref{lem:Gu>GP} and \ref{symmetricWMP}
and \eqref{eq:c_5} that
\[
	\GP(v_{i}^{q_i}d\sigma_{i})
  \geq
  	\frac{\conref{c:lb-gen2}}{2}
	\left( (\GP\sigma_{i})^{\frac{1}{1-q_{i}}} + \K \sigma_i \right)
  \qtext{in } \dom.
\]
Therefore, by using the elementary inequality 
\[ 
	(a+b)^q \geq \frac{1}{2}(a^q +b^q )
  \quad \text{for } a,b \geq 0 \text{ and } 0<q<1,
\]
we deduce
\begin{align*}
	\GP(u^{q_i} d\sigma_{i})
  &\ge
  	\GP\big( (v_{i}+H)^{q_{i}} d\sigma_i \big)
  \ge
  	\frac{1}{2}
	\big[
		\GP(v_{i}^{q_i} d\sigma_i )+ \GP( H^{q_{i}} d\sigma_i ) 
	\big]  \\
  &\ge
  	\frac{\conref{c:lb-gen2}}{4}
	\left[
	(\GP\sigma_{i})^{\frac{1}{1-q_{i}}} + \K \sigma_i + \GP( H^{q_{i}} d\sigma_i ) 
	\right].
\end{align*}
Therefore
\[
\begin{split}
	u
  &\ge
  	\sum_{i=1}^{m} \GP(u^{q_i} d\sigma_{i}) + \GP \sigma_0 + H  \\
  &\geq
 	\frac{\conref{c:lb-gen2}}{4}
	\left( \sum\limits_{i=1}^{m}
	\left[(\GP  \sigma_i)^{\frac{1}{1-q_i}}
	+ \K \sigma_i + \GP(H^{q_i} d\sigma_{i}) \right]
	+ \GP \sigma_0  \right) +H.
\end{split}
\]
As discussed in Remark \ref{rem_symmetrized},
this estimate remains valid
even when $G$ is QS,
by changing the constant $\conref{c:lb-gen2}/4$.
\end{proof}

\begin{proof}[Proof of Theorem \ref{bounded}: Part \ref{item:2.6-e}]
Let $u$ be any subsolution to \eqref{inteq_gen} and let
\[
	d\nu := \sum\limits_{i=1}^{m} u^{q_i} d\sigma_i + d\sigma_0.
\]
Then $u \leq  \GP \nu +H$ in $\dom$, and  
\[
	d\nu
  \leq
  	\sum_{i=1}^{m} (\GP \nu + H)^{q_i} d\sigma_i + d\sigma_0 
  \leq
 	\sum_{i=1}^{m} (\GP \nu)^{q_i} d\sigma_i
	+ \sum_{i=1}^{m} H^{q_i} d\sigma_i  + d\sigma_0
\]
on Borel subsets of $\dom$.
Therefore
\begin{equation}\label{eq:Gv<GPP}
	\GP \nu
  \leq
  	\sum_{i=1}^{m} \GP\big( (\GP \nu)^{q_i} d\sigma_i \big)
	+ \sum_{i=1}^{m} \GP(H^{q_i} d\sigma_i ) +\GP\sigma_0 .
 \end{equation}
Appealing to Lemma \ref{lemupper}, followed by the inequality
\[ 
	(a+b)^p \leq 2^{p-1}(a^p +b^p )
  \quad \text{for } a,b \geq 0 \text{ and } p > 1,
\]
we estimate
\begin{align*}
	\GP\big( (\GP \nu)^{q_i} d\sigma_i \big)
  &\le
  	(2h)^{q_i} (\GP \nu)^{q_i}
	\left[
	\GP \sigma_i + (\K \sigma_i)^{1-q_i}
	\right]  \\
  &\le
  	(2h)^{q_i} (\GP \nu)^{q_i} 2^{q_i}
	\left[
	(\GP \sigma_i )^{\frac{1}{1-q_i }}
	+ \K \sigma_i
	\right]^{1-q_i}  \\
  &=
  	\Big(\frac{1}{2m} \Big)^{q_i}
	(\GP \nu)^{q_i} \cdot (2m)^{q_i}(4h)^{q_i}
	\left[
	(\GP \sigma_i )^{\frac{1}{1-q_i }} + \K \sigma_i
	\right]^{1-q_i}  \\
  &\le
  	\frac{q_i}{2m} \GP \nu
	+ (1-q_i ) (8mh)^{\frac{q_i}{1-q_i}}
	\left[
	(\GP \sigma_i )^{\frac{1}{1-q_i }} + \K \sigma_i
	\right]  \\
  &\le
  	\frac{1}{2m} \GP \nu
	+ (8mh)^{\frac{q_1}{1-q_1}}
	\left[
	(\GP \sigma_i )^{\frac{1}{1-q_i}} + \K \sigma_i
	\right],
\end{align*}
where we used Young's inequality in the third inequality and,
$0<q_i\le q_1<1$ and $h\ge1/2$ in the last inequality.
Substituting this into \eqref{eq:Gv<GPP} and moving $\GP\nu$ to the left side,
we obtain
\[
	\GP \nu
  \leq
  	\conref{c:ub-gennn}
	\left(
	\sum_{i=1}^{m}
	\left[
	(\GP \sigma_i )^{\frac{1}{1-q_i }} + \K \sigma_i
	+ \GP(H^{q_i} d\sigma_i )
	\right]
	+\GP\sigma_0
	\right).
\]
Hence
\[
	u
  \leq
  	\conref{c:ub-gennn}
	\left(
	\sum_{i=1}^{m}
	\left[
	(\GP \sigma_i )^{\frac{1}{1-q_i }} + \K \sigma_i
	+ \GP(H^{q_i} d\sigma_i )
	\right]
	+\GP\sigma_0
	\right)
	+ H,
\]
as desired.
\end{proof}

\begin{proof}[Proof of Theorem \ref{bounded}: Part \ref{item:2.6-f}]
Let $u$ and $\tilde{u}$ be positive bounded solutions to \eqref{inteq_gen}
such that $u$ is minimal.
It suffices to show that $\tilde{u} \leq u$ in $\Omega$.
By parts \ref{item:2.6-d} and \ref{item:2.6-e}, 
we see that
\[
\begin{split}
	\tilde{u}
  &\leq
  	\conref{c:ub-gennn}
	\left(
	\sum_{i=1}^{m}
	\left[
	(\GP \sigma_i )^{\frac{1}{1-q_i }}
	+ \K \sigma_i + \GP(H^{q_i} d\sigma_i )
	\right]
	+\GP\sigma_0
	\right)
	+ H \\	
  &\leq
  	\kappa
	\left\{
	\frac{\conref{c:lb-gen2}}{4}
	\left(
	\sum_{i=1}^{m}
	\left[
	(\GP \sigma_i )^{\frac{1}{1-q_i }}
	+ \K \sigma_i + \GP(H^{q_i} d\sigma_i )
	\right]
	+\GP\sigma_0
	\right)
	+ H
	\right\} \\
  &\leq
  	\kappa u,
\end{split}
\]
where $\kappa =  4\conref{c:ub-gennn}/\conref{c:lb-gen2} \geq 1$.
 Therefore
\[
	\sum_{i=1}^{m} \GP (\tilde{u}^{q_{i}} d\sigma_{i}) 
  \leq
  	\sum_{i=1}^{m} \kappa^{q_{i}} \GP (u^{q_{i}} d\sigma_{i})
  \leq
  	\kappa^{q_{1}} \sum_{i=1}^{m} \GP (u^{q_{i}} d\sigma_{i}),
\]
and thus 
\[
	\tilde{u}
    =
	\sum_{i=1}^{m} \GP (\tilde{u}^{q_{i}} d\sigma_{i})
	+ \GP\sigma_{0} + H
  \leq
	\kappa^{q_{1}}
	\left[
	\sum_{i=1}^{m} \GP (u^{q_{i}} d\sigma_{i}) + \GP\sigma_{0} + H
	\right] 
  =
  	\kappa^{q_{1}} u .
\]
Iterating this argument, we have
\[
	\tilde{u} \leq \kappa^{q_{1}^{j}} u,
	\quad \forall j\in\N.
\]
Since $0<q_{1}<1$,
letting $j \rightarrow \infty$ yields that $\tilde{u} \leq u$ in $\dom$.
This proves the claim.
\end{proof}

\begin{remark}\label{rem}
We can establish
the uniqueness of a positive bounded solution to \eqref{inteq_gen}
without relying on its minimality among supersolutions.
Indeed, let $u$ and $\tilde{u}$
be positive bounded solutions to \eqref{inteq_gen}.
Arguing as in the proof of Theorem \ref{bounded} 
Part \ref{item:2.6-f} above,
we find a constant $ \kappa \geq 1$ such that
\[ 
	\kappa^{-q_{1}^{j}} u \leq \tilde{u} \leq \kappa^{q_{1}^{j}} u,
  \quad \forall j\in\N.
\]
Therefore $\tilde{u} = u$.
We refer the reader to \cite{VE1},
where a similar argument was employed to prove
the uniqueness of positive solutions to \eqref{inteq_gen}
in the case $m=1$.
 \end{remark}
 
The QM assumption
imposed in Theorem \ref{bounded}~\ref{item:2.6-ii}
is quite restrictive,
since
it fails for the Green function $G_{\LL}$ on a ball and,
more generally, on any bounded uniform domain in $\R^n$.
To address these limitations
while preserving the matching bilateral pointwise bounds
(and thus the uniqueness result),
we explore a broader class of the so-called quasi-metrically modifiable kernels.

A kernel $G:\dom\times\dom\to(0,+\infty]$
is called {\em quasi-metrically modifiable} (QMM)
with a positive function $\mathfrak{m} \in\CF(\dom)$
if the kernel $\tilde{G}$ defined by
\begin{equation}\label{modi}
	\tilde{G}(x,y)
  =
  	\frac{G(x,y)}{\mathfrak{m}(x)\mathfrak{m}(y)},
  \quad x,y \in \dom,
\end{equation}
is QM.
We call $\mathfrak{m}$ a {\em modifier} of $G$.

Examples of QMM kernels are presented in \cite{AN02, FNV, HW}.
In particular,
the Green functions associated with the Laplacian and the fractional Laplacian
of order $0<\alpha<n/2$
are QMM for bounded domains $\dom \subset \R^n$
satisfying the boundary Harnack principle,
such as Lipschitz domains,
or more generally NTA (non-tangentially accessible) domains,
or uniform domains.
Also, the second named author \cite{KH4} established
the generalized $3G$-inequality on a uniform cone in $\R^n$ $(n\geq3)$,
which guarantees that
$G_{-\Delta}$ is QMM with
$\mathfrak{m}$ being
the Martin kernel on $\Omega$ with pole at $\infty$.

We now observe that \eqref{inteq_gen}
is equivalent to the following integral equation in terms of
the modified kernel $\tilde{G}$ with modifier $\mathfrak{m}$:
\begin{equation}\label{modified}
	v
  =
  	\sum_{i=1}^{m} \tilde{\GP} (v^{q_i} d\tilde{\sigma}_{i})
	+
	\tilde{\GP} \tilde{\sigma}_0 + \tilde{H}
  \quad \text{in } \dom,
\end{equation}
where
\begin{equation} \label{definedmeasure}
	v
  :=
  	\frac{u}{\mathfrak{m}}, \quad
	d\tilde{\sigma}_i
  :=
  	\mathfrak{m}^{1+q_i} d\sigma_i, \quad
	d \tilde{\sigma}_0
  :=
  	\mathfrak{m}\,d\sigma_0, \quad
	\tilde{H}
  :=
  	\frac{H}{\mathfrak{m}} .
\end{equation}
In the same spirit as above,
the corresponding modified intrinsic potential
$\tilde{\mathbf{K}}\tilde{\sigma}_i$
is defined by
\[
	\tilde{\K} \tilde{\sigma}_i (x)
  =
  	\int_{0}^{\infty}
	\frac{[\tilde{\varkappa}(B_{\tilde{G}}(x,r))]^{\frac{q_i}{1-q_i}}}{r^2} \, dr,
  \quad x \in \dom,
\]
where for $B=B_{\tilde{G}}(x,r)$,
$\tilde{\varkappa}(B)$ is
the least constant satisfying
\[
 	\| \tilde{\GP} \nu \|_{L^{q_i}(\Omega,\,(\tilde{\sigma}_i)_B)}
  \leq
  	\tilde{\varkappa}(B) \| \nu \|,
  \quad \forall\nu \in \RM.
\]

The following theorem provides
matching lower and upper pointwise bounds
for bounded solutions to \eqref{inteq_gen}
in the case of QMM kernels $G$.

\begin{theorem}\label{cor}
Under the hypotheses of Theorem \ref{bounded},
let $\tilde{G}$ be the modified kernel on $\dom \times \dom$
defined by \eqref{modi} with modifier $\mathfrak{m}$.
\begin{enumerate}[label=\textup{(\roman*)}]
\item
If $G$ is \textup{QS} and $\tilde{G}$ satisfies the \textup{WMP},
then there exists a constant $0<\con<1$ such that
any nontrivial supersolution $u$ to \eqref{inteq_gen}
satisfies the lower pointwise estimate
\begin{equation}\label{lower}
	u
  \geq
  	\con\, \mathfrak{m}
	\left( \sum_{i=1}^{m}
	\left[ (\tilde{\GP} \tilde{\sigma}_i)^{\frac{1}{1-q_i}}
	+
	\tilde{\K}\tilde{\sigma}_i
	+
	\tilde{\GP}(\tilde{H}^{q_i} d\tilde{\sigma}_{i}) \right]
	+
	\tilde{\GP} \tilde{\sigma}_0 \right) + H
  \quad \text{in } \dom.
\end{equation}
\item
If $G$ is \textup{QMM} (i.e., $\tilde{G}$ is \textup{QM}),
then there exists a constant $\con>1$ such that
any subsolution $u$ to \eqref{inteq_gen}
satisfies the upper pointwise estimate
\begin{equation}\label{upper}
	u
  \leq
  	c \, \mathfrak{m}
	\left( \sum_{i=1}^{m}
	\left[(\tilde{\GP}  \tilde{\sigma}_i)^{\frac{1}{1-q_i}}
	+
	\tilde{\K}\tilde{\sigma}_i
	+
	\tilde{\GP}(\tilde{H}^{q_i} d\tilde{\sigma}_{i})  \right]
	+
	\tilde{\GP} \tilde{\sigma}_0 \right) + H
  \quad \text{in } \dom.
\end{equation}
 In this case,
 if \eqref{inteq_gen} admits a positive bounded solution,
 then it is the only positive bounded solution to \eqref{inteq_gen}. 
\end{enumerate}
\end{theorem}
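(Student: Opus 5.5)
The plan is to transfer everything to the modified equation \eqref{modified} and apply Theorem \ref{bounded} parts \ref{item:2.6-d} and \ref{item:2.6-e} to the kernel $\tilde G$. First I would verify the equivalence claimed before the statement. Dividing \eqref{inteq_gen} by $\mathfrak{m}(x)$ and writing $G(x,y)=\mathfrak{m}(x)\mathfrak{m}(y)\tilde G(x,y)$ gives
\[
\frac{\GP(u^{q_i}d\sigma_i)(x)}{\mathfrak{m}(x)}=\int_\dom \tilde G(x,y)\,\mathfrak{m}(y)u(y)^{q_i}\,d\sigma_i(y)=\tilde{\GP}(v^{q_i}d\tilde\sigma_i)(x),
\]
since $\mathfrak{m}u^{q_i}=\mathfrak{m}^{1+q_i}v^{q_i}$. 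The same computation handles $\GP\sigma_0$. The same pointwise computation also shows that $u$ is a super- or subsolution of \eqref{inteq_gen} if and only if $v=u/\mathfrak{m}$ is a super- or subsolution of \eqref{modified}. The measures $\tilde\sigma_i$ lie in $\RM$ because $\mathfrak{m}$ is continuous and positive, and $\tilde\sigma_i\not\equiv0$. Local integrability transfers for the same reason.

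For (i), $\tilde G$ is QS with the same constant $a$, since the modifier factor $\mathfrak{m}(x)\mathfrak{m}(y)$ is symmetric, and $\tilde G$ satisfies the WMP by hypothesis. So the QS version of part \ref{item:2.6-d} applies to $v$ with $\tilde H$ in place of $H$. It gives
\[
v\ge \con\Big(\sum_{i}\big[(\tilde{\GP}\tilde\sigma_i)^{\frac{1}{1-q_i}}+\tilde\K\tilde\sigma_i+\tilde{\GP}(\tilde H^{q_i}d\tilde\sigma_i)\big]+\tilde{\GP}\tilde\sigma_0\Big)+\tilde H.
\]
Multiplying by $\mathfrak{m}$ yields \eqref{lower}, because $\mathfrak{m}\tilde H=H$. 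One point needs checking: part \ref{item:2.6-d} was stated with $H$ bounded, but its proof never uses boundedness of $H$, so it applies to the possibly unbounded $\tilde H$. I would remark on this explicitly.

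For (ii), $\tilde G$ is QM, so part \ref{item:2.6-e} applies to the subsolution $v$ of \eqref{modified}. Its proof uses only Lemma \ref{lemupper} and Young's inequality, so again it needs no boundedness of $\tilde H$. Multiplying by $\mathfrak{m}$ gives \eqref{upper} with $\con=\conref{c:ub-gennn}$. Since a QM kernel is symmetric and satisfies the WMP with $b=2h$, the hypotheses of (i) hold as well. Hence for any positive bounded solution $u$ the two bounds match up to a constant $\kappa\ge1$: the $\mathfrak{m}(\cdots)$ terms compare with ratio $\con_{\rm up}/\con_{\rm low}$, and $H$ appears with coefficient $1$ on both sides.

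Uniqueness then follows the argument of Remark \ref{rem}. Given positive bounded solutions $u,\tilde u$, we get $\tilde u\le\kappa u$ and $u\le\kappa\tilde u$ on $\dom$. Plugging into the equation gives $\GP(\tilde u^{q_i}d\sigma_i)\le\kappa^{q_1}\GP(u^{q_i}d\sigma_i)$, and hence $\tilde u\le\kappa^{q_1}u$ because $\GP\sigma_0+H\le\kappa^{q_1}(\GP\sigma_0+H)$. Iterating gives $\kappa^{-q_1^j}u\le\tilde u\le\kappa^{q_1^j}u$ for all $j$, so $\tilde u=u$. This step works directly in the original variables, so no integrability of the modified data is needed.

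The main obstacle I expect is confirming that the applications of Theorem \ref{bounded} to \eqref{modified} are legitimate even though $\tilde H$ may be unbounded and $\tilde{\GP}\tilde\sigma_i$ may be infinite somewhere. I would handle this by rereading the proofs of parts \ref{item:2.6-d} and \ref{item:2.6-e}: they are pointwise inequalities valid in $[0,+\infty]$, and the absorption step in \ref{item:2.6-e} is fine wherever $\tilde{\GP}\nu<\infty$. That finiteness holds since $\tilde{\GP}\nu\le v\le u/\mathfrak{m}$ is finite for a bounded solution. For a general subsolution, the bound is trivial at points where the right-hand side is infinite.
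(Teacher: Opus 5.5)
Your proposal is correct and is essentially the paper's proof. You pass to \eqref{modified}, apply Theorem \ref{bounded}~\ref{item:2.6-d} and \ref{item:2.6-e} to $\tilde G$, $\tilde\sigma_i$, $\tilde H$, multiply back by $\mathfrak{m}$, and get uniqueness from the matched two-sided bounds via the iteration of Remark \ref{rem}; your checks (quasi-symmetry of $\tilde G$, local finiteness of $\tilde\sigma_i$, no use of boundedness of $\tilde H$, and $\tilde{\GP}\nu<\infty$ for bounded solutions) supply details the paper leaves implicit.

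One caution concerns your closing patch for general subsolutions: the absorption in part \ref{item:2.6-e} also fails wherever $\tilde{\GP}\nu=+\infty$ while the right-hand side is finite, and this really occurs. Take the Riesz kernel with $\mathfrak{m}\equiv1$, $H\equiv0$, $\sigma_0=0$, $m=1$ and $d\sigma_1=(1+|y|)^{-2\alpha-\epsilon}dy$. Then $u=(1+|y|)^N$ with $N\ge\epsilon/q_1$ is an unbounded subsolution, since $\I_{2\alpha}(u^{q_1}d\sigma_1)\equiv+\infty$, whereas the right-hand side of the upper estimate is bounded by parts \ref{item:2.6-a} and \ref{item:2.6-d}. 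So the upper estimate is only valid for subsolutions with $\tilde{\GP}\nu<\infty$ (such as bounded solutions); the paper's proof shares this restriction silently, and it does not affect the uniqueness claim.
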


\begin{proof}
Let $v$, $\tilde{\sigma_i}$, $\tilde{\sigma_0}$ and $\tilde{H}$
be defined as in \eqref{definedmeasure}.
By Theorem \ref{bounded}~\ref{item:2.6-d} applied to
$v$, $\tilde{\sigma_i}$, $\tilde{\sigma_0}$, $\tilde{G}$ and $\tilde{H}$,
we obtain 
\[
	v
  \geq
  	\con \left( \sum_{i=1}^{m}
	\Big[ (\tilde{\GP} \tilde{\sigma}_i)^{\frac{1}{1-q_i}}
	+
	\tilde{\K} \tilde{\sigma}_i
	+
	\tilde{\GP}( \tilde{H}^{q_i}d\tilde{\sigma}_i)  \Big]
	+
	\tilde{\GP} \tilde{\sigma}_0 \right) + \tilde{H}
  \quad \text{in } \dom.
\]
This yields \eqref{lower} for the supersolution $u$.
By Theorem \ref{bounded}~\ref{item:2.6-e} applied to
$v$, $\tilde{\sigma_i}$, $\tilde{\sigma_0}$, $\tilde{G}$ and $\tilde{H}$,
we obtain 
\[
	v
  \leq
  	\con \left( \sum_{i=1}^{m}
	\Big[ (\tilde{\GP} \tilde{\sigma}_i)^{\frac{1}{1-q_i}}
	+
	\tilde{\K} \tilde{\sigma}_i
	+
	\tilde{\GP} (\tilde{H}^{q_i}d\tilde{\sigma}_i)  \Big]
	+
	\tilde{\GP} \tilde{\sigma}_0 \right) + \tilde{H}
  \quad \text{in } \dom.
\]
Hence, \eqref{upper} holds for the subsolution $u$.
Moreover, if \eqref{inteq_gen} admits a positive bounded solution,
then it must be unique in view of  Remark \ref{rem}.
\end{proof} 

Let us now prove Theorem \ref{main:thm} and Theorem \ref{fractional},
where $\dom$ is assumed to be regular due to the boundary condition.

\begin{proof}[Proof of Theorem \ref{main:thm}]
Applying Theorem \ref{bounded}~\ref{item:2.6-a}, \ref{item:2.6-b} with
$G= G_{\mathcal L}$
and
$H = H_{f}$,
we see that
\eqref{cond:greenbdd} is valid if and only if
there exists a (minimal) positive bounded solution to \eqref{main:eq}.
Moreover, Theorem \ref{bounded}~\ref{item:2.6-c} implies \eqref{eq:est i}.
The uniqueness follows from Theorem \ref{cor}
when $\dom$ is a bounded uniform domain or
a uniform cone.
 \end{proof}

\begin{proof}[Proof of Theorem \ref{fractional}]
This follows from Theorem \ref{bounded}
with
$\dom = \R^{n}$,
$G(x,y) = k_{2\alpha}(x,y) = |x-y|^{2\alpha - n}$
and $H\equiv 0$.
\end{proof} 

We conclude this section with the following observation.

\begin{remark}
In the case $m = \infty$ (infinitely many sublinear terms),
the analysis of \eqref{inteq_gen} becomes more delicate.
Nevertheless,
our arguments yield that a sufficient condition for
the existence of a (minimal) positive bounded solution to \eqref{inteq_gen} is
\[
	\sum_{i=0}^{\infty} \| \GP \sigma_i \|_{\infty} < +\infty.
\]
Moreover, such a solution satisfies the pointwise bounds
\[
	\conref{c:lb-gen2}
	\sum_{i=1}^{\infty} (\GP\sigma_{i})^{\frac{1}{1-q_{i}}}+\GP\sigma_0 + H
  \le
	u
  \le
	\conref{c:ub-gen3inf}^{q_1}
	\sum_{i=1}^{\infty} \GP\sigma_{i} + \GP\sigma_0 + H
  \quad \text{in } \dom.
\]
Here, $\lbrace q_{i} \rbrace_{i=1}^{\infty} \subset (0,1)$ is nonincreasing
and
\[
	\conlabel{c:ub-gen3inf}
  :=
  	\max\left\{1,
	\left(
	\sum_{i=0}^{\infty} \|\GP\sigma_{i}\|_{\infty} + \|H\|_{\infty}
	\right)^{\frac{1}{1-q_1}}
	\right\}.
\]
Also, analogous results for
\eqref{main:eq} and \eqref{main:frac} with $m = \infty$
can be obtained as well.
\end{remark}
\section{Continuous Solutions to \eqref{main:eq2}}\label{sec:cont}

We begin this section with the following properties of
the $G_{\LL}$-Kato condition (Definition \ref{defkato}),
which can be derived by modifying
the proofs of \cite[Lemmas 2.1--2.3]{HS}.

For $z = \infty$ and $r>0$,
we use the notation
$B(z,r)=(\R^{n} \cup \{ \infty \} ) \setminus \overline{B(0,1/r)}$
and
\[
	\int_{\dom \cap B(z,r)} G_{\LL}(x,y) \, d\sigma (y)
  =
  	\int_{\dom \setminus \overline{B(0,1/r)} } G_{\LL}(x,y) \,d \sigma (y). 
 \]

\begin{lemma} \label{lem:sup}
Let $\sigma \in \RM$.
Then the following are equivalent.
\begin{enumerate}[label=\textup{(\alph*)}]\itemsep=3pt
\item
$\sigma$ satisfies the $G_{\LL}$-Kato condition.
\item
$\dlim_{r \to 0+} \left(\sup_{x \in \dom} \int_{\dom \cap B(z,r)} G_{\LL}(x,y) \, d \sigma (y) \right) = 0$
for any $z \in \cl$.
\item
$\dlim_{r \to 0+} \left(\sup_{z \in \cl} \left( \sup_{x \in \dom} \int_{\dom \cap B(z,r)} G_{\LL}(x,y) \, d \sigma (y) \right) \right) = 0$.
\end{enumerate}
\end{lemma}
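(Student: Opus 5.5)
The plan is to prove the cycle (a)$\Rightarrow$(b)$\Rightarrow$(c)$\Rightarrow$(a). The implication (c)$\Rightarrow$(a) is immediate: taking $z=x$ inside the supremum gives the first part of the Kato condition. When $\dom$ is unbounded, taking $z=\infty$ gives \eqref{Kato2}, since $B(\infty,r)\cap\dom=\dom\setminus\overline{B(0,1/r)}$ and the boundary sphere has no effect after a harmless change of $r$. Throughout, write
\[
F(z,r):=\sup_{x\in\dom}\int_{\dom\cap B(z,r)}G_{\LL}(x,y)\,d\sigma(y),
\]
which is nondecreasing in $r$.

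For (a)$\Rightarrow$(b) we argue separately by the position of $z$. If $z=\infty$, (b) is exactly \eqref{Kato2}. If $z\in\cl\setminus\{\infty\}$ is a finite point, fix $x\in\dom$ and split according to whether $|x-z|<2r$ or not. If $|x-z|<2r$, then $B(z,r)\subset B(x,3r)$, so the integral is at most $\sup_{x'}\int_{\dom\cap B(x',3r)}G_{\LL}(x',y)\,d\sigma$, which tends to $0$ by (a).

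If $|x-z|\ge 2r$, then $|x-y|\ge r$ for $y\in B(z,r)$, and $G_{\LL}(x,\cdot)$ is a positive $\LL$-solution in $B(z,2r)\cap\dom$, vanishing continuously on the regular boundary part. The idea is to compare $G_{\LL}(x,y)$ with $G_{\LL}(w,y)$ for a suitable point $w$ near $z$, and so reduce again to a small-ball integral around a point. The cleanest route, following \cite{HS}, is a mean-value or averaging trick. For $y\in B(z,r)$ the function $x\mapsto G_{\LL}(x,y)$ is $\LL$-harmonic on $\dom\setminus\overline{B(z,2r)}$. Define $\mathbf{G}_{\LL}\sigma_{B(z,r)}$, the potential of $\sigma$ restricted to $B(z,r)$. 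This is an $\LL$-superharmonic function that is $\LL$-harmonic outside $\overline{B(z,r)}$, so by the maximum principle on $\dom\setminus\overline{B(z,2r)}$ (using $G_{\LL}=0$ on the regular boundary, or the strong maximum principle for Green potentials as noted in the paper) its supremum over $\dom$ is attained, up to the SMP constant $1$, on $\operatorname{supp}\sigma_{B(z,r)}\subset \overline{B(z,r)}$. Thus it suffices to bound $\mathbf{G}_{\LL}\sigma_{B(z,r)}(x)$ for $x\in\overline{B(z,r)}\cap\dom$. For such $x$ we have $B(z,r)\subset B(x,2r)$, and the first part of (a) handles this case as well. This SMP argument (Frostman maximum principle, quoted in the paper for $G_{\LL}$) in fact makes the case split unnecessary, and in the write-up we use it directly:
\[
F(z,r)\le\sup_{x\in\operatorname{supp}\sigma_{B(z,r)}}\mathbf{G}_{\LL}\sigma_{B(z,r)}(x)\le\sup_{x}\int_{\dom\cap B(x,2r)}G_{\LL}(x,y)\,d\sigma(y).
\]
One technical point: the SMP requires $\mathbf{G}_{\LL}\sigma_{B(z,r)}\le t$ on the support. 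We therefore apply it with $t$ equal to the right-hand side above, which is finite for small $r$ by (a).

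For (b)$\Rightarrow$(c), the estimate just obtained already gives uniformity for finite $z$, via the bound $F(z,r)\le F^*(2r):=\sup_x\int_{\dom\cap B(x,2r)}G_{\LL}\,d\sigma$, which is independent of $z$. It remains to handle $z$ near $\infty$ uniformly when $\dom$ is unbounded. For $|z|$ large and $r$ small, $B(z,r)\subset\R^n\setminus \overline{B(0,1/r')}$ with $r'\to0$ whenever $|z|\ge 1/(2r)$, so \eqref{Kato2} controls those $z$. For $|z|<1/(2r)$ the bound $F^*(2r)$ applies. A compactness argument on $\cl$ is an alternative, but the explicit bounds avoid it. We expect the main obstacle to be justifying the maximum-principle step rigorously, namely that $\mathbf{G}_{\LL}$ satisfies the SMP for restricted measures with possibly unbounded potential. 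We would address this by truncating $\sigma$ to compact subsets and passing to the limit by monotone convergence.
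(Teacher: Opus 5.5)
Your Frostman-maximum-principle step is correct. For finite $z$, the potential of $\sigma$ restricted to $\dom\cap B(z,r)$ has support in $\overline{B(z,r)}$, so $F(z,r)\le F^*(2r)$, where $F^*(\rho):=\sup_{x\in\dom}\int_{\dom\cap B(x,\rho)}G_{\LL}(x,y)\,d\sigma(y)$. Together with \eqref{Kato2} for the point $\infty$, this proves (a)$\Rightarrow$(c), and hence (a)$\Rightarrow$(b). Your (c)$\Rightarrow$(a) is also fine.

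The gap is in the step you label (b)$\Rightarrow$(c). There you use $F^*(2r)\to0$, but that is the first half of the $G_{\LL}$-Kato condition, i.e.\ part of hypothesis (a), not of (b). Condition (b) only gives $F(z,r)\to0$ for each \emph{fixed} $z\in\cl$, with no uniformity in $z$. By contrast, $F^*(\rho)$ involves balls whose centres range over all of $\dom$. So in your write-up nothing is ever deduced from (b), and its equivalence with (a) and (c) is not established. Your remark that the explicit bounds make compactness unnecessary is therefore mistaken. This direction is exactly where compactness of $\cl$ is needed, and it is the reason boundary points and $\infty$ are included in (b).

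The repair is the compactness argument you set aside.
\begin{itemize}
\item Suppose (c) fails. Then there are $\varepsilon>0$, $r_k\to0^+$ and $z_k\in\cl$ with $F(z_k,r_k)\ge\varepsilon$.
\item Since $\cl$ is compact in $\R^n\cup\{\infty\}$, you may assume $z_k\to z_0\in\cl$.
\item For every $\rho>0$ one has $B(z_k,r_k)\subset B(z_0,\rho)$ for all large $k$. If $z_0$ is finite, this holds because $|z_k-z_0|+r_k<\rho$ eventually. If $z_0=\infty$, it holds because either $z_k=\infty$ and $r_k<\rho$, or $|z_k|-r_k>1/\rho$ eventually.
\item Since $G_{\LL}\ge0$, this inclusion gives $F(z_0,\rho)\ge F(z_k,r_k)\ge\varepsilon$ for every $\rho>0$. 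This contradicts (b) at the point $z_0$.
\end{itemize}
With this, the cycle (a)$\Rightarrow$(b)$\Rightarrow$(c)$\Rightarrow$(a) closes, with your maximum-principle bound supplying the first arrow.
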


\begin{lemma}\label{bdd}
If $\sigma \in \RM$ satisfies the $G_{\LL}$-Kato condition,
then $\LGP \sigma$ is bounded on $\dom$.
\end{lemma}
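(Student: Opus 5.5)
We cover $\cl$ by finitely many small neighbourhoods and bound the potential over each one, plus the part of $\sigma$ that lives away from all of them, where the Green function is bounded. By Lemma~\ref{lem:sup}(c) we first fix $r_0>0$ such that
\[
	\sup_{z\in\cl}\,\sup_{x\in\dom}\int_{\dom\cap B(z,r_0)} G_{\LL}(x,y)\,d\sigma(y)\le 1 .
\]
The sets $B(z,r_0/2)$, $z\in\cl$, with the convention $B(\infty,r)=(\R^n\cup\{\infty\})\setminus\overline{B(0,1/r)}$, form an open cover of the compact set $\cl$ in $\R^n\cup\{\infty\}$. We therefore extract finitely many points $z_1,\dots,z_N$ with $\cl\subset\bigcup_{k=1}^N B(z_k,r_0/2)$. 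It then follows that $\sigma\le\sum_{k}\sigma|_{B(z_k,r_0/2)}$ as measures on $\dom$, and hence, for every $x\in\dom$,
\[
	\LGP\sigma(x)\le \sum_{k=1}^N \int_{\dom\cap B(z_k,r_0)}G_{\LL}(x,y)\,d\sigma(y)\le N.
\]
This already gives boundedness with no further work, because the $B(z_k,r_0/2)$ cover all of $\dom$ (since $\dom\subset\cl$) and each term is controlled uniformly in $x$ by the choice of $r_0$.

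If one prefers to avoid part (c), the same cover can be built from (b). For each $z\in\cl$ choose $r_z>0$ with $\sup_x\int_{\dom\cap B(z,r_z)}G_{\LL}(x,\cdot)\,d\sigma\le1$. Then cover $\cl$ by the open sets $B(z,r_z)$, take a finite subcover, and sum exactly as above.

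The only step needing care is the point at infinity when $\dom$ is unbounded. There we use that $\cl$ is compact in the one-point compactification and that a neighbourhood of $\infty$ has the form $B(\infty,r)$, which is the second Kato condition \eqref{Kato2}; for bounded $\dom$ the closure is simply compact in $\R^n$. Measurability of $\dom\cap B(z,r)$ is clear, so I do not expect any real obstacle beyond correctly invoking Lemma~\ref{lem:sup}.
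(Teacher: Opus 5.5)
Your proposal is correct and follows essentially the intended route: the paper gives no proof and defers to a modification of \cite[Lemmas 2.1--2.3]{HS}, and the natural argument there is yours, namely fix a radius via the uniform Kato condition of Lemma~\ref{lem:sup}(c) (or pointwise radii via (b)), cover the compact set $\overline{\Omega}^\infty$ by finitely many sets $B(z_k,r)$ (including the neighbourhood $B(\infty,r)$ of $\infty$ when $\Omega$ is unbounded), and sum the $N$ pieces, each bounded uniformly in $x$. Halving $r_0$ is harmless but unnecessary.
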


\begin{lemma}\label{lem:kato}
Let $\sigma \in \RM $ and $C > 0$.
Then the following are equivalent.
\begin{enumerate}[label=\textup{(\alph*)}]\itemsep=3pt
\item
$\sigma$ satisfies the $G_{\LL}$-Kato condition.
\item
$\LGP \sigma_E  \in \CF(\cl)$
for any Borel subset $E$ of $\dom$,
where $\sigma_E$ is the restriction of $\sigma$ to $E$.
\item
$\LGP \sigma \in \CF(\cl)$.
\item
The family
$\{\LGP(\psi \, d \sigma) : \psi \in \Psi \}$
is equicontinuous on $\cl$,
where
\[
	\Psi
  :=
  	\left\{ \psi :0 \leq \psi \leq C
	\text{ $\sigma$-a.e. on $\dom$}
	\right\}.
\]
\end{enumerate}
Moreover, in this case,
$\LGP(\psi \, d \sigma)$
vanishes continuously on $\bdy$
for each $\psi \in \Psi$.
\end{lemma}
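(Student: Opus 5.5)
The plan is to prove the cycle (a)$\Rightarrow$(b)$\Rightarrow$(c)$\Rightarrow$(a) and then (a)$\Rightarrow$(d)$\Rightarrow$(c), adapting \cite[Lemmas 2.1--2.3]{HS}. The implication (b)$\Rightarrow$(c) is trivial, taking $E=\dom$. For (d)$\Rightarrow$(c), the function $\psi\equiv C$ belongs to $\Psi$, so $C\,\LGP\sigma$ is equicontinuous, hence continuous, on $\cl$.

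For (a)$\Rightarrow$(b) and (a)$\Rightarrow$(d), the two can be handled together. Let $\psi\in\Psi$, and let $E$ be a Borel set (so that $\sigma_E$ corresponds to $\psi=C\chi_E$). Fix $z\in\cl$ and $\varepsilon>0$. By Lemma \ref{lem:sup}(c), choose $r>0$ with
\[
\sup_{x\in\dom}\int_{\dom\cap B(z',2r)}G_{\LL}(x,y)\,d\sigma(y)<\varepsilon \quad\text{for all } z'\in\cl .
\]
Split $\LGP(\psi\,d\sigma)=\LGP(\psi\chi_{B(z,2r)}d\sigma)+\LGP(\psi\chi_{\dom\setminus B(z,2r)}d\sigma)$.
\begin{itemize}
\item The first (local) term is bounded by $C\varepsilon$ uniformly in $x$ and $\psi$.
\item The second term involves $y$ away from $B(z,2r)$. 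For $x\in B(z,r)\cap\dom$, the function $x\mapsto G_{\LL}(x,y)$ is $\LL$-harmonic in $B(z,2r)\cap\dom$.
\end{itemize}
For interior $z$, local H\"older continuity (De Giorgi--Nash--Moser) together with the Harnack inequality gives equicontinuity of the second term near $z$. This uses the bound $\LGP\sigma\le M$ from Lemma \ref{bdd} to control its size uniformly in $\psi$.

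For $z\in\bdy$ we need instead that the second term tends to $0$ as $x\to z$, uniformly in $\psi$. This also yields the final ``moreover'' claim. Here regularity of $\dom$ is used. The second term is a nonnegative $\LL$-harmonic function in $B(z,2r)\cap\dom$, bounded by $CM$. It is also dominated by $CM$ times the $\LL$-harmonic measure of $\partial B(z,2r)\cap\dom$ relative to $B(z,2r)\cap\dom$, because Green potentials have zero boundary values quasi-everywhere on $\partial\dom$. Since $z$ is a regular point, this harmonic measure tends to $0$ at $z$. The case $z=\infty$ is handled the same way, using \eqref{Kato2} and the one-point compactification.

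I expect this boundary step to be the main obstacle. Making the comparison rigorous requires care: we need that $\LGP\sigma$ vanishes at regular boundary points, which follows by approximating $\sigma$ by compactly supported pieces, each of whose potentials is continuous up to the regular boundary with zero boundary value.

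For (c)$\Rightarrow$(a), use a Dini-type argument as in \cite{HS}. Suppose (a) fails. Then there are $\varepsilon>0$, radii $r_k\to0$ and points $x_k$ with
\[
\int_{\dom\cap B(x_k,r_k)}G_{\LL}(x_k,y)\,d\sigma(y)\ge\varepsilon,
\]
or the analogous statement near $\infty$. Pass to a subsequence with $x_k\to z\in\cl$. Continuity of $\LGP\sigma$ at $z$, combined with lower semicontinuity of the pieces $\LGP\sigma_{\dom\setminus B(z,\delta)}$ and $\LGP\sigma_{B(z,\delta)}$, forces
\[
\limsup_k \LGP\sigma_{B(z,\delta)}(x_k)\le \LGP\sigma(z)-\LGP\sigma_{\dom\setminus B(z,\delta)}(z).
\]
This bound is $\sigma$-mass near $z$ weighted by the kernel. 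It tends to $0$ as $\delta\to0$ when $z\in\dom$, and equals $0$ when $z\in\bdy$ because $\LGP\sigma(z)=0$ there, by the boundary vanishing obtained above. Since $B(x_k,r_k)\subset B(z,\delta)$ eventually, this contradicts the choice of $\varepsilon$.
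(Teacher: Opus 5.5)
Your overall architecture is the kind of argument the paper delegates to \cite[Lemmas 2.1--2.3]{HS}: near/far splitting via Lemma~\ref{lem:sup}, De Giorgi--Nash--Moser for the far part at interior points, and a contradiction argument for (c)$\Rightarrow$(a). The interior part is fine. The gap is at boundary points, which is where the substance of the lemma lies.

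You justify $u_\psi:=\LGP(\psi\chi_{\dom\setminus B(z,2r)}\,d\sigma)\le CM\,\omega$ by saying that Green potentials have zero boundary values quasi-everywhere (or at regular points), to be proved by approximating $\sigma$ by compactly supported pieces. That statement is false in general. On a ball, take a sum of Green equilibrium potentials of tiny disjoint balls accumulating at every boundary point, with radii shrinking fast enough. This is a Green potential bounded by $2$ whose $\limsup$ is $\ge 1$ at every (regular) boundary point. The approximation also cannot prove even the true special case you need. You only get $\LGP\sigma_{K_k}\uparrow\LGP\sigma$ pointwise, and an increasing limit of functions with boundary limit $0$ need not have boundary limit $0$. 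The same unproved fact reappears in (c)$\Rightarrow$(a). There you use that the continuous extension of $\LGP\sigma$ vanishes on $\bdy$ ``by the boundary vanishing obtained above''; that vanishing was derived assuming (a), so the argument is circular.

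The missing ingredient is that a Green potential has greatest $\LL$-harmonic minorant $0$.

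For the boundary step, let $U=B(z,2r)\cap\dom$ and let $\omega$ be the Perron solution on $U$ with data $\min\{1,|\xi-z|/r\}$. Points of $\partial B(z,2r)\cap\dom$ are regular for $U$, so $\omega\to1$ there. Since $u_\psi\le CM$ everywhere, the function equal to $(u_\psi-CM\omega)^+$ on $U$ and to $0$ on $\dom\setminus U$ is a nonnegative $\LL$-subharmonic function on $\dom$ with $t\le u_\psi$. It therefore lies below the greatest harmonic minorant of $u_\psi$, which is $0$. Thus $u_\psi\le CM\omega$ on $U$ uniformly in $\psi$, with no information on $\partial\dom$ required. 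Moreover $\omega(x)\to0$ as $x\to z$ because regularity is local; the case $z=\infty$ is handled the same way with $U=\dom\setminus\overline{B(0,R)}$.

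For (c)$\Rightarrow$(a), let $g$ be the boundary function of the continuous extension. Then $\LGP\sigma$ lies in the upper Perron class of $g$, so $0\le H_g\le\LGP\sigma$, which forces $H_g\equiv0$. Regularity of $\dom$ then gives $g=\lim H_g=0$. Consequently, at $z\in\bdy$ the Kato integrals at $x_k$ are bounded by $\LGP\sigma(x_k)\to0$.

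Separately, when \eqref{Kato2} fails with $x_k\to z\in\dom$, your inclusion $B(x_k,r_k)\subset B(z,\delta)$ is irrelevant. Run the lower semicontinuity argument with $\LGP\sigma_{\dom\setminus B(0,R)}$ instead; its value at $z$ tends to $0$ as $R\to\infty$.
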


We are now ready to prove Theorem \ref{continuous}.

\begin{proof}[Proof of Theorem \ref{continuous}]
For the necessity part, let
$u \in \CF(\cl)$
be a positive solution to \eqref{main:eq2}.
We write
\[
	\LGP \sigma_0
  =
  	u - \sum_{i=1}^{m} \LGP(u^{q_i} d\sigma_{i}) - H_f.
\]
Since the right-hand side is
upper semicontinuous on $\cl$,
we have
$\LGP \sigma_0 \in \CF(\cl)$.
Similarly,
$\LGP(u^{q_i} d\sigma_{i}) \in \CF(\cl)$
for all $i=1,\dots,m$. 
Hence, by  Lemma \ref{lem:kato},
the measures $\sigma_0$ and $u^{q_i} d\sigma_{i}$
satisfy the $G_{\LL}$-Kato condition.
It remains to show that $\sigma_i$ itself satisfies
the $G_{\LL}$-Kato condition for all $i=1, \dots , m$.
To this end, let $z\in \cl$, $r > 0$ and define
$\omega_{i}^{(r)} := (\sigma_i)_{\dom \cap B(z,r)}$.
Then $u\ge\LGP(u^{q_i}d\sigma_i)\ge\LGP(u^{q_i}d\omega_{i}^{(r)})$ in $\dom$.
Appealing to Lemma \ref{lem:Gu>GP} with $b=1$, 
we have
\begin{equation*}\label{est:sup}
	\sup_{\dom} \LGP(u^{q_i} d\omega_{i}^{(r)}) 
  \geq
  	(1-q_{i})^{\frac{1}{1-q_{i}}}
	\sup_{\dom}  (\LGP\omega_{i}^{(r)})^{\frac{1}{1-q_{i}}}.
\end{equation*} 
Since $u^{q_i} d\sigma_i$ satisfies the $G_{\LL}$-Kato condition,
Lemma \ref{lem:sup} gives
\begin{equation*}\label{est:suplim1}
	\lim_{r \to 0^{+}}
	\left(\sup_{x \in \dom} \LGP(u^{q_i}d\omega_{i}^{(r)})(x) \right)=0.
\end{equation*}
Therefore
\[
	\lim_{r \rightarrow 0^{+}}
	\left(\sup_{x \in \dom} \LGP\omega^{(r)}_{i}(x)\right)=0.
\]
Applying Lemma \ref{lem:sup} again, we conclude that 
$\sigma_i$ satisfies the $G_{\LL}$-Kato condition.

For the sufficiency part,
suppose each $\sigma_i$ satisfies the $G_{\LL}$-Kato condition.
According to Lemma \ref{bdd},
$\LGP \sigma_i$ is bounded on $\dom$.
By Theorem \ref{main:thm},
there exists a minimal positive bounded solution $u$ to \eqref{main:eq},
enjoying \eqref{eq:est i}.
Moreover,
$\LGP(u^{q_i} d \sigma_{i}),\LGP \sigma_0\in\CF(\cl)$
by Lemma \ref{lem:kato}.
Thus $u \in \CF(\cl)$ by \eqref{inteq1}.
Hence $u$ is a desired solution to \eqref{main:eq2}.

The uniqueness follows from Theorem \ref{main:thm},
since continuous solutions are bounded on $\dom$.
\end{proof}

We conclude this paper
by presenting an alternative construction of
a continuous solution to \eqref{main:eq2},
employing Schauder's fixed point theorem,
which does not rely on the use of Theorem~\ref{main:thm}.

In the rest, we suppose that
$\sigma_0,\sigma_1,\dots,\sigma_m\in\RM$ satisfy the $G_{\LL}$-Kato condition.
Note that
$\GP_{\mathcal{L}} \sigma_i$ is bounded on $\dom$
by Lemma \ref{bdd}.
Moreover, $H_f \in \CF(\cl)$
since $\dom$ is regular and $f \in \CF^{+}(\bdy)$,
and $\|H_f \|_{\infty} \leq \|f\|_{\infty}$
by the maximum principle.
Define
\[
	v_{0}(x)
  :=
  	\sum_{i=1}^{m} (1-q_{i})^{\frac{1}{1-q_{i}}}
	\LGP\sigma_{i}(x)^{\frac{1}{1-q_{i}}}
	+
	\LGP\sigma_0 (x) + H_f (x),
  \quad x \in \dom,
\]
and
\[
	\conlabel{c:cor}
  :=
  	\max\left\{
	1, \|v_0\|_{\infty},
	\left( \sum_{i=0}^{m} \|\GP_{\mathcal{L}}\sigma_{i}\|_{\infty}
	+ \|f\|_{\infty} \right)^{\frac{1}{1-q_{1}}}
	\right\}.
\]
Consider the class of functions
\[
	\mathcal{P}
  :=
  	\left\{ v \in \CF(\cl) :
	v_0(x) \leq v(x) \leq \conref{c:cor} \text{ for } x \in \dom
	\right\}.
\]
Then $\mathcal{P}$ is a nonempty closed bounded convex subset of $\CF(\cl)$.
Define a linear operator $T_f$ on $\mathcal{P}$ by
\[
	T_f (v)(x)
  := 
  \begin{cases}
	\sum\limits_{i=1}^{m} \LGP(v^{q_i}d \sigma_i)(x)
	+\LGP\sigma_0 (x)+H_{f}(x),
	& x \in \dom,\\[0.8em]
	f(x), & x \in \bdy.
\end{cases}
\]

We shall show that $T_{f}: \mathcal{P} \rightarrow \mathcal{P}$ is a compact operator. 

\begin{lemma}\label{lem:colsed}
$T_f(\mathcal{P}) \subset \mathcal{P}$.
\end{lemma}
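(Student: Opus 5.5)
Fix $v\in\mathcal{P}$. We have to show three things: $T_f(v)\in\CF(\cl)$, the lower bound $T_f(v)\ge v_0$ in $\dom$, and the upper bound $T_f(v)\le\conref{c:cor}$ in $\dom$.

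\emph{Continuity.} Since $0\le v\le\conref{c:cor}$, each density $v^{q_i}$ satisfies $0\le v^{q_i}\le \conref{c:cor}^{q_i}\le\conref{c:cor}$, so it belongs to the class $\Psi$ of Lemma \ref{lem:kato} with $C=\conref{c:cor}$. Each $\sigma_i$ satisfies the $G_{\LL}$-Kato condition, so Lemma \ref{lem:kato} gives $\LGP(v^{q_i}d\sigma_i)\in\CF(\cl)$, and it vanishes continuously on $\bdy$. Likewise $\LGP\sigma_0\in\CF(\cl)$ and vanishes on $\bdy$. Finally $H_f\in\CF(\cl)$ with boundary values $f$, because $\dom$ is regular. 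Therefore $T_f(v)$ tends to $f(y)$ as $x\to y\in\bdy$, and $T_f(v)\in\CF(\cl)$.

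\emph{Lower bound.} We use $v\ge v_0\ge(1-q_i)^{\frac{1}{1-q_i}}(\LGP\sigma_i)^{\frac{1}{1-q_i}}$ together with Lemma \ref{lem:iterated}, applied with $b=1$ (the SMP for $G_{\LL}$) and $s=\frac1{1-q_i}$:
\[
\LGP(v^{q_i}d\sigma_i)\ge (1-q_i)^{\frac{q_i}{1-q_i}}\LGP\big((\LGP\sigma_i)^{\frac{q_i}{1-q_i}}d\sigma_i\big)\ge(1-q_i)^{\frac{q_i}{1-q_i}}(1-q_i)(\LGP\sigma_i)^{\frac1{1-q_i}}.
\]
The right-hand side equals $(1-q_i)^{\frac1{1-q_i}}(\LGP\sigma_i)^{\frac1{1-q_i}}$. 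Summing over $i$ and adding $\LGP\sigma_0+H_f$ gives $T_f(v)\ge v_0$ in $\dom$. This is the same computation as \eqref{eq:u0u1}.

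\emph{Upper bound.} Write $M=\sum_{i=0}^m\|\LGP\sigma_i\|_\infty+\|f\|_\infty$ and use $\|H_f\|_\infty\le\|f\|_\infty$. Since $\conref{c:cor}\ge1$ and $q_i\le q_1$,
\[
T_f(v)\le \sum_{i=1}^m \conref{c:cor}^{q_i}\|\LGP\sigma_i\|_\infty+\|\LGP\sigma_0\|_\infty+\|f\|_\infty\le \conref{c:cor}^{q_1}M.
\]
By definition $\conref{c:cor}\ge M^{\frac1{1-q_1}}$, so $M\le\conref{c:cor}^{1-q_1}$ and hence $\conref{c:cor}^{q_1}M\le\conref{c:cor}$.

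The only step needing care is the continuity. It requires that the densities $v^{q_i}$ lie in a fixed bounded class $\Psi$, and this is exactly where the uniform bound $v\le \conref{c:cor}$ built into $\mathcal{P}$ is used.
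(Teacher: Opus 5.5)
Your proof is correct and follows the same approach as the paper. Continuity comes from Lemma \ref{lem:kato} together with $H_f\in\CF(\cl)$. The upper bound uses $\conref{c:cor}\ge1$, $q_i\le q_1$ and $\conref{c:cor}^{1-q_1}\ge\sum_{i=0}^m\|\LGP\sigma_i\|_\infty+\|f\|_\infty$, and the lower bound is the Lemma \ref{lem:iterated} computation with $b=1$ and $s=\frac{1}{1-q_i}$. You also check explicitly that $v^{q_i}$ lies in the class $\Psi$ with $C=\conref{c:cor}$ and that $T_f(v)$ matches $f$ on $\bdy$, which spells out what the paper compresses into ``by Lemma \ref{lem:kato}''.
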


\begin{proof}
Let $v \in \mathcal{P}$.
Then
$T_f(v) \in \CF(\cl)$
by Lemma \ref{lem:kato} and $H_f \in \CF(\cl)$.
Since $\conref{c:cor} \geq 1$ and $0<q_i\le q_1$, we have
\[
\begin{split}
	\| T_f(v) \|_\infty
  &\le
  	\sum_{i=1}^m \| v \|_\infty^{q_i} \| \GP_{\mathcal{L}}\sigma_i \|_\infty
	+
	\| \GP_{\mathcal{L}}\sigma_0 \|_\infty + \|f\|_\infty  \\
  &\le
  	\conref{c:cor}^{q_1}
	\left(
	\sum_{i=1}^m  \| \GP_{\mathcal{L}}\sigma_i \|_\infty
	+
	\| \GP_{\mathcal{L}}\sigma_0 \|_\infty + \|f\|_\infty
	\right)  \\
  &\le
  	\conref{c:cor}^{q_1} \cdot \conref{c:cor}^{1-q_1}
  =
  	\conref{c:cor}.
\end{split}
\]
Also, since 
$v \geq v_0 \geq (1-q_{i})^{\frac{1}{1-q_{i}}}(\GP_{\mathcal{L}}\sigma_{i})^{\frac{1}{1-q_{i}}}$ in $\dom$,
it follows from Lemma \ref{lem:iterated} that in $\dom$,
\[
\begin{split}
	T_f(v)
  &\ge
  	\sum_{i=1}^m (1-q_i)^{\frac{q_i}{1-q_i}}
	\LGP\big( (\LGP\sigma_i)^{\frac{q_i}{1-q_i}} d\sigma_i \big)
	+ \LGP\sigma_0 + H_f  \\
  &\ge
  	\sum_{i=1}^m
	(1-q_i)^{\frac{q_i}{1-q_i}} (1-q_i) (\LGP\sigma_i)^{\frac{1}{1-q_i}}
	+ \LGP\sigma_0 + H_f  \\
  &=
  	v_0.
\end{split}
\]
Thus $T_f(v) \in \mathcal{P}$.
\end{proof}

\begin{lemma}\label{lem:com}
$T_f(\mathcal{P}) $ is relatively compact in $\CF(\cl)$.
\end{lemma}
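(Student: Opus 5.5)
The plan is to apply the Arzel\`a--Ascoli theorem on the compact metric space $\cl$, which is a closed subset of the one point compactification $\R^n\cup\{\infty\}$. Relative compactness of $T_f(\mathcal{P})$ in $\CF(\cl)$ with the sup norm then follows from two facts: the family is uniformly bounded, and it is equicontinuous on $\cl$.

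Uniform boundedness comes directly from Lemma \ref{lem:colsed}. Every $T_f(v)$ with $v\in\mathcal{P}$ satisfies $0\le T_f(v)\le \conref{c:cor}$ on $\dom$. On $\bdy$ we have $T_f(v)=f$, which is bounded by $\|f\|_\infty\le \conref{c:cor}$.

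For equicontinuity, decompose
\[
T_f(v)=\sum_{i=1}^m \LGP(v^{q_i}\,d\sigma_i)+\LGP\sigma_0+H_f
\]
on $\dom$, with the boundary values given by $f$. The terms $\LGP\sigma_0$ and $H_f$ do not depend on $v$, and each lies in $\CF(\cl)$: the first by Lemma \ref{lem:kato}, the second by the regularity of $\dom$. A single continuous function on a compact set is trivially equicontinuous.

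For the nonlinear terms, put $\psi_i:=v^{q_i}$. Since $0\le v\le \conref{c:cor}$ on $\dom$ and $\conref{c:cor}\ge1$, we get $0\le\psi_i\le \conref{c:cor}^{q_i}\le \conref{c:cor}$ everywhere on $\dom$, hence $\sigma_i$-a.e. So $\psi_i$ belongs to the class $\Psi$ of Lemma \ref{lem:kato} with $C=\conref{c:cor}$. Because $\sigma_i$ satisfies the $G_{\LL}$-Kato condition, Lemma \ref{lem:kato}(d) shows that $\{\LGP(\psi\,d\sigma_i):\psi\in\Psi\}$ is equicontinuous on $\cl$. By the final assertion of that lemma, each such potential vanishes continuously on $\bdy$.

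We then combine these pieces. A finite sum of equicontinuous families, plus the fixed function $\LGP\sigma_0+H_f$, is again equicontinuous. On $\bdy$, the extended value $f$ agrees with the continuous extension of the sum, because the potentials vanish there and $H_f$ extends continuously to $f$ by regularity. So the function defined piecewise in $T_f$ is exactly the continuous extension of the sum, and the family $T_f(\mathcal{P})$ is equicontinuous on $\cl$. Arzel\`a--Ascoli then gives relative compactness.

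The only delicate point is checking that the piecewise definition of $T_f(v)$ coincides with the continuous extension of the interior expression. Once that is settled, the equicontinuity established on $\cl$ really is equicontinuity of the functions $T_f(v)$ themselves. This is handled by the vanishing statement in Lemma \ref{lem:kato} together with $H_f\to f$ on $\bdy$.
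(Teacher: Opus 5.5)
Your proof is correct and follows the paper's argument. It gets uniform boundedness from Lemma \ref{lem:colsed}, gets equicontinuity on $\cl$ from Lemma \ref{lem:kato} (applied with $\psi=v^{q_i}\le \conref{c:cor}$) together with $H_f\in\CF(\cl)$, and concludes by Arzel\`a--Ascoli; your explicit check that the boundary value $f$ in the definition of $T_f$ agrees with the continuous extension of the interior expression is a detail the paper leaves implicit, already used in Lemma \ref{lem:colsed}.
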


\begin{proof} 
Since $\sigma_{i}$ satisfies the $G_{\mathcal{L}}$-Kato condition
and $H_f \in \CF(\cl)$,
it follows from Lemma~\ref{lem:kato} that
$T_f(\mathcal{P})$ is equicontinuous on $\cl$.
By Lemma~\ref{lem:colsed},
$T_f(\mathcal{P})$ is uniformly bounded on $\cl$. 
Therefore, by Arzel\`a--Ascoli's theorem,
$T_f(\mathcal{P})$ is relatively compact in $\CF(\cl)$.
\end{proof}

\begin{lemma}\label{lem:cont}
$T_f  $ is continuous on $\mathcal{P}$.
\end{lemma}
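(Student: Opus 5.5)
We need to show: if $v_k \to v$ uniformly on $\cl$ with $v_k, v \in \mathcal{P}$, then $T_f(v_k) \to T_f(v)$ uniformly on $\cl$.

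On $\bdy$ every image equals $f$, so the difference vanishes there. Hence it suffices to estimate $\sup_{x\in\dom}|T_f(v_k)(x)-T_f(v)(x)|$. The terms $\LGP\sigma_0$ and $H_f$ cancel, which leaves
\[
	|T_f(v_k)-T_f(v)|
  \le
  	\sum_{i=1}^m \LGP\big( |v_k^{q_i}-v^{q_i}| \, d\sigma_i \big)
  \quad \text{in } \dom .
\]

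The map $t\mapsto t^{q_i}$ is $q_i$-Hölder on $[0,\infty)$ with constant $1$, i.e. $|s^{q_i}-t^{q_i}|\le |s-t|^{q_i}$ for $s,t\ge0$. Since $v_k,v\ge v_0\ge0$, we get $|v_k^{q_i}-v^{q_i}|\le \|v_k-v\|_\infty^{q_i}$ $\sigma_i$-a.e. on $\dom$. Therefore
\[
	\sup_{\dom}|T_f(v_k)-T_f(v)|
  \le
  	\sum_{i=1}^m \|v_k-v\|_\infty^{q_i}\, \|\LGP\sigma_i\|_\infty .
\]
Each $\|\LGP\sigma_i\|_\infty$ is finite by Lemma \ref{bdd}, since $\sigma_i$ satisfies the $G_{\LL}$-Kato condition. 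So the right-hand side tends to $0$ as $k\to\infty$.

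Because $\cl$ is a compact metric space, sequential continuity suffices, and this gives continuity of $T_f$ on $\mathcal{P}$ in the sup norm of $\CF(\cl)$. No real obstacle is expected. The only points to check are the elementary Hölder inequality, the nonnegativity of $v_k$ and $v$ (which the Hölder bound requires), and the boundedness of the potentials. We avoid differentiating $t^{q_i}$, which could be problematic near $0$, by using the Hölder estimate instead of a mean value argument. Combined with Lemmas \ref{lem:colsed} and \ref{lem:com}, Schauder's fixed point theorem then applies.
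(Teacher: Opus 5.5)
Your proof is correct and is essentially the paper's argument. The paper uses the same inequality $|v_1^{q_i}-v_2^{q_i}|\le|v_1-v_2|^{q_i}$ to reach $\sum_{i=1}^m\|v_1-v_2\|_\infty^{q_i}\|\LGP\sigma_i\|_\infty$, and then, for $\|v_1-v_2\|_\infty<1$, bounds this by $\bigl(\sum_{i=1}^m\|\LGP\sigma_i\|_\infty\bigr)\|v_1-v_2\|_\infty^{q_m}$; this H\"older-type modulus gives continuity directly, without passing to sequences.

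One small correction to your justification: sequential continuity suffices because $\mathcal{P}$ is a subset of the normed space $\CF(\cl)$, not because $\cl$ is a compact metric space.
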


\begin{proof} 
Let $v_1, v_2 \in \mathcal{P}$ satisfy $\| v_1-v_2 \|_\infty<1$.
Then, in $\dom$,
\begin{align*}
	|T_f(v_1)-T_f(v_2)|
  &\le
  	\sum_{i=1}^m \LGP(|v_1^{q_i}-v_2^{q_i}|d\sigma_i)  \\
  &\le
  	\sum_{i=1}^m \LGP(|v_1-v_2|^{q_i}d\sigma_i)  \\
  &\le
  	\sum_{i=1}^m \| v_1-v_2 \|^{q_i}_\infty \| \LGP\sigma_i \|_\infty.
\end{align*}
Since $0<q_m\le q_i$, we have
\[
	\| T_f(v_1)-T_f(v_2) \|_\infty
  \le
  	\left(
	\sum_{i=1}^m \| \LGP\sigma_i \|_\infty
	\right)
	\| v_1-v_2 \|_\infty^{q_m}.
\]
This implies the continuity of $T_f$ on $\mathcal{P}$.
\end{proof}

\begin{proof}[Proof of Theorem \ref{continuous} (Sufficiency Part)]
The above discussions enable us to employ 
Schauder's fixed point theorem \cite{Ebe}
to establish the existence of 
$u \in \mathcal{P}$ such that $T_f(u) = u$ on $\cl$. 
Clearly,
this $u$ is a positive solution in 
$\CF(\cl)$ to \eqref{main:eq2}. 
\end{proof}
\subsection*{Data Availability Statement}
The authors do not analyze or generate any datasets because this work proceeds with a theoretical and mathematical approach. The relevant materials can be obtained from the references below.

\subsection*{Conflict of Interest} 
On behalf of all authors, the corresponding author states that there is no conflict of interest.

\subsection*{Acknowledgements}
This research is supported by Thailand Science Research and Innovation (TSRI) Fundamental Fund, fiscal year 2026.
Part of this research was conducted during a visit of A.S. to the Centre for Applicable Mathematics, Tata Institute of Fundamental 
Research, Bangalore, whose hospitality is gratefully acknowledged. T.T.S. is supported by the Excellent Foreign Student (EFS) scholarship, Sirindhorn International Institute of Technology (SIIT), Thammasat University. K.H. is supported by JSPS KAKENHI Grant Number JP23K03149. The authors also thank Aye Chan May for helpful discussions on the uniqueness result.
\bibliographystyle{abbrv} 

\end{document}